 \newtheorem{theorem}{Theorem}[section]
 \newtheorem{lemma}[theorem]{Lemma}
 \newtheorem{corollary}[theorem]{Corollary}
 \newtheorem{proposition}[theorem]{Proposition}
 \newtheorem{conjecture}[theorem]{Conjecture}
 \newtheorem{remark}[theorem]{Remark}
 \theoremstyle{definition}
 \newtheorem{question}[theorem]{Question}
 \newtheorem{problem}[theorem]{Problem}
\numberwithin{equation}{section}
\newcommand{\p}{\partial}
\newcommand{\C}{\mathbb{C}}
\newcommand{\R}{\mathbb{R}}
\begin{document}
\title{On conical asymptotically flat manifolds}

\author{Mingyang Li}
\address{
Department of Mathematics, University of California, Berkeley, CA 94720}
\email{mingyang\textunderscore li@berkeley.edu}
 \author{Song Sun}
\address{Institute for Advanced Study in Mathematics, Zhejiang University, Hangzhou 310058, China}
\address{
Department of Mathematics, University of California, Berkeley, CA 94720} 
\email{songsun@zju.edu.cn}

\dedicatory{Dedicated to Professor Xiaochun Rong for his 70th birthday}


\begin{abstract}
	We prove a conjecture of Petrunin and Tuschmann on the non-existence of asymptotically flat 4-manifolds asymptotic to the half plane. We also survey recent progress and questions concerning gravitational instantons, which serve as our motivation for studying this question. 
\end{abstract}

\maketitle

\section{Introduction}
In this paper we study conical asymptotically flat ($\mathcal{AF}$) manifolds. By definition, a complete non-compact $m$ dimensional Riemannian manifold $(M^m, g, p)$ is called 
 \begin{itemize}
 \item  $\mathcal{AF}$  if the Riemannian curvature decays at the rate $|Rm_g|=o(r^{-2})$ as $r\rightarrow \infty$, where $r$ denotes the distance to $p$; in other words, the \emph{asymptotic curvature} $$A(M, g)\equiv \limsup_{r\rightarrow\infty} r^2|Rm_{g}|$$ is zero.
 \item  \emph{conical} if  it is asymptotic to a unique metric cone $\mathcal C_\infty$ at infinity, i.e., $(M, \lambda^{-2}g, p)$ converges in the pointed Gromov-Hausdorff topology to $\mathcal C_\infty$ as $\lambda\rightarrow\infty$.   Here the  \emph{asymptotic cone} $\mathcal C_\infty$  may have a lower dimension. 
 \end{itemize}
We need to make a few remarks about the terminologies here. First  the words ``asymptotically flat" may refer to properties of varying generality in the literature. The notion of $\mathcal{AF}$ we use here is the same as the notion of asymptotically flat used in \cite{PT}. A related notion is what we shall call \emph{strongly $\mathcal{AF}$}, which means that $|Rm_g|$ is bounded by a positive decreasing function $f(r)$ with $\int_0^\infty rf(r)dr<\infty$. An even stronger condition is that of \emph{faster than quadratic curvature decay}, which means that $|Rm_g|\leq r^{-2-\epsilon}$ for some $\epsilon>0$ as $r\rightarrow\infty$.  We also try to distinguish the notation $\mathcal{AF}$ from AF, which usually refers to being asymptotic to a specific flat  model end (see Section \ref{sec:discussion}). 

There has been extensive work in Riemannian geometry studying topological and geometrical properties of $\mathcal{AF}$ manifolds. Recall that flat ends of Riemannian manifolds were completely classified by Eischenburg-Schroeder \cite{ES}. Ends of $\mathcal{AF}$ manifolds are much more flexible; for example, any 2 dimensional non-compact surface admits an $\mathcal{AF}$ metric \cite{Abresch1}. It is known \cite{Kasue, MNO} that strongly $\mathcal{AF}$ manifolds are automatically conical. However there are interesting examples of conical $\mathcal{AF}$ manifolds which are not strongly $\mathcal{AF}$. Examples include the family of ALG$^*$ hyperk\"ahler metrics \cite{Hein}.

Our interest in $\mathcal{AF}$ manifolds is motivated by the fact that they provide potential model ends for complete Ricci-flat metrics.  In 4 dimensions a complete non-compact Ricci-flat manifold with $\int |Rm_g|^2<\infty$ is called a \emph{gravitational instanton}. The readers should be warned that there are varying definitions of gravitational instantons in the literature. Some require the extra assumption of being hyperk\"ahler, but we do not make this hypothesis in the current paper.  All known examples of gravitational instantons are conical and they are all $\mathcal{AF}$ except the family of ALH$^*$ hyperk\"ahler metrics (see \cite{Hein, SZ}). The latter are known to have  \emph{nilptoent} asymptotic geometries.

For general conical $\mathcal{AF}$ manifolds, in 2001 Petrunin and Tuschmann \cite{PT} proved a structural result regarding the end structure. In particular, they proved that there are only finitely many ends and a simply connected end must have asymptotic cone the flat $\R^m$ if $m\neq 4$ and the flat $\R^4, \R^3$ or the half plane $\mathbb H\equiv \R\times [0, \infty)$ when $m=4$. It follows that when $m\neq 4$ the metric has Euclidean volume growth and there is no \emph{collapsing} phenomena along the convergence to the asymptotic cone. But when $m=4$ collapsing may indeed occur so the situation is more interesting.  Clearly $\R^4$ can be realized. Also $\R^3$ can be realized as the asymptotic cone of the Taub-NUT gravitational instanton (see also \cite{Unnebrink}), whose end is diffeomorphic to $S^3\times [0, \infty)$.  Along the convergence to the asymptotic cone $\mathcal C_\infty$, the $S^3$ collapses along the Hopf fibration to $S^2$.   For the half plane $\mathbb H$, we know that it can be realized as the asymptotic cone of the flat space $\R^4/\mathbb Z$ (and of some members of the Kerr family of gravitational instantons), where the generator of the $\mathbb Z$-action is given by a translation in the first $\R^2$ factor and an irrational rotation in the second $\R^2$ factor. The end is diffeomorphic to $(S^1\times S^2)\times [0, \infty)$; in particular it is  \emph{not} simply-connected.  Petrunin-Tuschmann further made the following conjecture

\begin{conjecture}[Petrunin-Tuschmann \cite{PT}]\label{conj:PT}
There is no conical $\mathcal{AF}$  4-manifold with a simply-connected end whose  asymptotic cone is the half plane. 
\end{conjecture}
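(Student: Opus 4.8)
The plan is to argue by contradiction: assuming a conical $\mathcal{AF}$ $4$-manifold $(M^4,g,p)$ with a simply-connected end $E$ and asymptotic cone $\mathcal{C}_\infty=\mathbb{H}$, I will extract from the collapse a noncontractible circle in $E$, contradicting $\pi_1(E)=0$. First I would set up the collapsing picture. Since $A(M,g)=0$, the rescalings $(M,\lambda^{-2}g,p)$ have curvature tending to $0$ on each fixed annulus, so the convergence $(M,\lambda^{-2}g,p)\to\mathbb{H}$ takes place with (locally) vanishing curvature in the limit. Over the interior $\mathbb{H}^{\circ}=\R\times(0,\infty)$, which is a smooth flat $2$-manifold, the Cheeger--Fukaya--Gromov theory of collapse with bounded curvature applies: for all sufficiently large $r$ the corresponding region of $E$ carries a pure nilpotent Killing structure whose fibres are infranilmanifolds of dimension $4-2=2$, hence flat, i.e. flat $2$-tori (or Klein bottles in the non-orientable case). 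Thus, away from the boundary ray, $E$ is fibred over $\mathbb{H}^{\circ}$ with $T^2$ fibres that collapse to points.

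The heart of the matter is the local structure along the \emph{boundary} $\partial\mathbb{H}=\R\times\{0\}$, and this is the step I expect to be the main obstacle. A smooth $4$-manifold cannot collapse with bounded curvature onto a $2$-manifold \emph{with boundary} unless the collapse degenerates in a prescribed way along the edge: passing to the orthonormal frame bundle $FM$, the collapse is generated by a torus action $T^2$ commuting with $O(4)$, and the appearance of $\partial\mathbb{H}$ forces a circle subgroup $S^1_a\subset T^2$ to act on a transverse slice as a rotation of a $2$-disk $D^2$, the fixed point of this rotation mapping to the boundary. The local model to establish is therefore that a neighbourhood in $E$ of (the region collapsing to) a boundary point is a family of solid tori $D^2\times S^1_b$ parametrised by an interval of $\partial\mathbb{H}$: as one approaches $\partial\mathbb{H}$, the collapsing $T^2$ fibre degenerates by pinching the $S^1_a$-circle to the centre of $D^2$, while the complementary circle $S^1_b$ survives as the core $\{0\}\times S^1_b$. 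Making this rigorous requires a careful analysis of the Cheeger--Fukaya--Gromov structure across the singular stratum $\partial\mathbb{H}$ (where the fibre dimension drops from $2$ to $1$), together with a check of orientability and holonomy, ruling out that a Klein-bottle fibre or a reflection produces the boundary by a genuinely different mechanism.

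Granting the local model, the conclusion is topological. The core circle $\{0\}\times S^1_b$ of the solid torus is noncontractible inside $D^2\times S^1_b$, and I would show it remains noncontractible in $E$: the inclusion of the solid-torus neighbourhood ought to be $\pi_1$-injective because, on the complementary side, the interior fibration over $\mathbb{H}^{\circ}$ has contractible base, so no global relation can kill the class $[S^1_b]$. Concretely, I expect $E$ to deformation-retract (possibly up to a finite cover dictated by the structure group of the collapse) onto a space homotopy equivalent to $S^1_b$, giving $\pi_1(E)\supseteq\dZ$ generated by the surviving circle --- exactly as in the model $\R^4/\dZ$, whose end is $(S^1\times S^2)\times[0,\infty)$ with $\pi_1=\dZ$. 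This contradicts the hypothesis that $E$ is simply connected and proves the conjecture. The decisive and most delicate point is the boundary analysis above, identifying the half-plane edge with a rotational circle symmetry and extracting the persistent core circle.
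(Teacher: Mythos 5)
There is a genuine gap, and it sits exactly at the step you yourself flag as decisive: the claim that the core circle $S^1_b$ survives in $\pi_1(E)$. The end only sees $\mathbb H$ minus a compact set, and the boundary of $\mathbb H^{\circ}$ consists of \emph{two} rays $P_1$ and $P_2$ going to infinity in opposite directions. Nothing forces the same circle subgroup of the collapsing $T^2$ to pinch along both rays; on the contrary, simple-connectedness of the end forces the two stabilizers $G_1,G_2\subset T^2$ to be \emph{different} circle subgroups with $G_1\cap G_2=\{e\}$. In that situation each of the two generating circles of the $T^2$ fibre bounds a disk near one of the two rays, so the whole $T^2$ dies in $\pi_1$ and the cross-section is $S^3$ (or a lens space) with its standard torus action --- exactly the end topology $S^3\times[0,\infty)$, which is simply connected. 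So your intended contradiction ``$\pi_1(E)\supseteq\mathbb Z$'' simply does not follow; your argument would at best rule out the case where the same circle pinches on both rays, which is the already-realized $(S^1\times S^2)\times[0,\infty)$ case of $\R^4/\mathbb Z$. The paper makes this failure mode explicit: Section 3.2 constructs a sequence of metrics on the simply-connected piece $S^3\times[2^{-1},2]$ with $\sup|Rm|\to 0$ collapsing to the annulus $\mathbb A\subset\mathbb H$, so no local or purely topological analysis of the collapse (which is all your argument uses) can prove the conjecture; the obstruction is global.

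Your setup through the Cheeger--Fukaya--Gromov reduction to a $T^2$-invariant metric fibred over $\mathbb H^{\circ}$, with circle stabilizers along the boundary, does agree with the paper's Lemma \ref{prop:CFG} and the surrounding discussion. What is missing is the entire quantitative second half: the paper introduces, on the overlap region away from the two boundary rays, the functions $\sigma$ (normalized length of the vector $\xi=\p_1-(1-\tau)\p_2$ orthogonal to $\p_2$) and $\tau$ (measuring how far the two circle generators are from coinciding after rescaling), and uses the hypothesis that the asymptotic cone is exactly $\mathbb H$ to derive the asymptotics $\tau\to 0$, $\sigma/\rho\to 0$, $\sigma/(\rho\tau)\to 0$ (codimension-two collapse), together with the derivative bounds $\rho\sigma_\rho/\sigma\to 0$ and $\rho^2\tau_\rho/\sigma\to 0$ coming from the flat limit geometry of the local universal covers. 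An application of L'H\^opital's rule to $\tau/(\sigma/\rho)$ then yields $\infty=0$, which is the actual contradiction. You would need to supply an argument of this global, metric nature to close the gap.
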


Notice that there is no curvature equation imposed. It is easy to see that if a conical $\mathcal{AF}$ 4-manifold has an end with asymptotic cone $\mathbb H$, then the end must be diffeomorphic to $N\times [0, \infty)$ where $N$ is diffeomorphic to either $S^1\times S^2$ or $S^3/\Gamma$. So the confirmation of Conjecture \ref{conj:PT} would give a topological classification of such ends. On the other end, if Conjecture \ref{conj:PT} is false then potentially there could be new interesting gravitational instantons with this asymptotics. 

 The results in \cite{PT} were proved by investigating the geometry of the rescaled annuli $(M, \lambda^{-2}g, p)$ as $\lambda\rightarrow\infty$ when they collapse to lower dimensions. Foundational results on collapsing  were obtained by Cheeger, Fukaya, Gromov, and others (see for example \cite{CFG, Rong}). Specifically, \cite{PT} makes use of the \emph{continuously} collapsing techniques developed by Petrunin-Rong-Tuschmann \cite{PRT}.  In particular, the argument is of \emph{local} nature, i.e., it focuses on the local collapsing geometry of the family of almost flat annuli of fixed size (with respect to the rescaled metrics), and it does not use the fact that the annuli come from the end of a \emph{fixed} $\mathcal{AF}$ manifold. In fact as mentioned in \cite{PT} (see also Section 3), one may construct a sequence of Riemannian metrics $g_i$ on $\mathfrak A=S^3\times [0, 1]$ with $\sup_{A}|Rm_{g_i}|\rightarrow 0$ which collapse to the annulus  $\mathbb A\equiv\{2^{-1}\leq r\leq 2\}$ in $\mathbb H$, where $r$ is the distance function to the origin in $\mathbb H$. Therefore Conjecture \ref{conj:PT} is of \emph{global} nature. We remark that  even under the stronger assumption of faster than quadratic curvature decay Conjecture \ref{conj:PT} was unknown.

In this paper we confirm the conjecture of Petrunin-Tuschmann.

\begin{theorem}\label{thm}
Conjecture \ref{conj:PT} holds. 
\end{theorem}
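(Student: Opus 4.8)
The plan is to combine the local collapsing structure coming from the work of Petrunin--Tuschmann (and the underlying Cheeger--Fukaya--Gromov theory) with a global analysis of how that structure closes up near the apex of the cone. As recalled in the excerpt, a simply-connected end with asymptotic cone $\mathbb H$ is diffeomorphic to $N\times[0,\infty)$ with $N\cong S^1\times S^2$ or $S^3/\Gamma$; simple-connectivity forces $N\cong S^3$. Since $\mathbb H=C(I)$ is the metric cone over an interval $I$ of length $\pi$, the rescaled cross sections $N^3$ collapse to $I$, and I would first make precise the induced \emph{torus symmetry}. After rescaling by $r$ and passing to the frame-bundle/local-universal-cover picture, the hypothesis $|\Rm_g|=o(r^{-2})$ forces the rescaled curvature to tend to $0$, so in the limit one obtains a genuinely flat local model carrying a nilpotent, hence here abelian, $T^2$-structure fibering over the interior of $\mathbb H$. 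The generic fibre is a flat $2$-torus $T^2=\mathbb R^2/\Lambda$; over the interior of $\mathbb H$, which is contractible, the lattice $\Lambda\cong\mathbb Z^2$ and the affine structure carry no monodromy and may be taken constant.

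The topology enters through the two boundary rays of $\mathbb H$. Along each ray a primitive class $\gamma_\pm\in\Lambda$ degenerates: the corresponding circle in $T^2$ shrinks and produces the $2$-dimensional fixed locus of a circle subgroup. The key reduction is then the genus-one Heegaard classification of such a degenerating torus fibration over $I$: the cross section $N^3$ is $S^3$ precisely when $\det(\gamma_+,\gamma_-)=\pm1$, is a lens space when $\det(\gamma_+,\gamma_-)=\pm k$, and is $S^1\times S^2$ exactly when $\gamma_+=\pm\gamma_-$, in which case the transverse primitive class never degenerates and represents a nontrivial element of $\pi_1$. Thus confirming the conjecture amounts to showing that, under our hypotheses, the two degenerating classes must be parallel, $\det(\gamma_+,\gamma_-)=0$ --- which contradicts $N\cong S^3$.

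The heart of the matter, and the step I expect to be the main obstacle, is to rule out the transverse case $\det(\gamma_+,\gamma_-)=\pm1$ using the \emph{global} geometry, namely that the two rays are the two halves of a single straight boundary line meeting at the apex of $\mathbb H$ with total angle exactly $\pi$. In the flat model the two circle subgroups are realized either as rotations in two orthogonal $2$-planes of $\Isom(\mathbb R^4)$ --- which occurs exactly in the transverse case, and whose $T^2$-quotient is the \emph{quadrant} $C([0,\pi/2])$ of angle $\pi/2$ --- or as one rotation together with one screw motion, as in the $\mathbb R^4/\mathbb Z$ example, whose quotient is the half-plane of angle $\pi$. I would make this dichotomy rigorous by relating the opening angle at the apex to the degeneration data: a genuine vertex at which two \emph{distinct} primitive classes degenerate is necessarily a corner of angle strictly less than $\pi$, so a straight corner (angle exactly $\pi$) forces $\gamma_+=\pm\gamma_-$. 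Crucially this uses the apex --- equivalently all scales of $M$ near its core --- which is exactly why the statement is global and invisible on a fixed annulus $\mathfrak A=S^3\times[0,1]$, where no corner is present and collapsing models do exist, in agreement with the construction noted in the excerpt.

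The technical difficulties I anticipate are: (i) upgrading the local Cheeger--Fukaya--Gromov models to a \emph{single} torus fibration over all of $\mathbb H$, with estimates uniform as $r\to\infty$, controlling the deviation of the true metric from the flat invariant model using only the $o(r^{-2})$ decay rather than a curvature equation; (ii) analyzing the degeneration precisely along each ray to extract the primitive classes $\gamma_\pm$ and to verify that a transverse fixed-point structure would pin the apex angle at $\pi/2$; and (iii) excluding more degenerate or non-smooth collapse near the apex. Once the angle-versus-degeneration dichotomy is in place, the contradiction with $\det(\gamma_+,\gamma_-)=\pm1$, hence with simple-connectivity of the end, is immediate.
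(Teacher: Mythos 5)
Your setup---reducing to a $T^2$-invariant structure on the end fibering over $\mathbb H$, identifying the two degenerating primitive classes $\gamma_\pm$ along the boundary rays, and recasting the $S^3$ versus $S^1\times S^2$ dichotomy as $\det(\gamma_+,\gamma_-)=\pm1$ versus $\gamma_+=\pm\gamma_-$---matches the paper's reduction (Lemma \ref{prop:CFG} and the identification of the stabilizers $G_1,G_2$ with $G_1\cap G_2=\{e\}$). But the step you yourself flag as the heart of the matter is where the proposal has a genuine gap. The claimed dichotomy ``transverse degeneration forces a corner of angle strictly less than $\pi$ at the apex'' is asserted, not proved, and the mechanism you propose for proving it cannot work: the apex of the asymptotic cone is the Gromov--Hausdorff limit of the \emph{entire compact core} of $M$ under the rescalings $\lambda^{-2}g$, so there is no flat local model, no torus fibration, and no degeneration data ``at the apex'' against which to measure an angle. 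The classes $\gamma_\pm$ only degenerate along the two boundary rays at infinity, and there the local limit group is always a single rotation times a translation (the Key Lemma of \cite{PT}, which the paper also invokes); consequently the boundary is locally a straight totally geodesic line regardless of whether $\det(\gamma_+,\gamma_-)$ is $0$ or $\pm1$. Indeed the construction in Section \ref{subsec:collapse to annulus} exhibits $S^3\times[2^{-1},2]$---the transverse case---collapsing with curvature tending to $0$ onto an annulus in $\mathbb H$ with straight boundary, so no angle measurement along the rays, nor at any fixed scale near the core, can detect the topology.

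The actual global obstruction, which your proposal does not contain, is quantitative and lives at $\rho\to\infty$ in the interior of $\mathbb H$: because both $G_1$ and $G_2$ have fixed loci, both generators $\p_1,\p_2$ converge after rescaling to the \emph{same} period-$2\pi$ rotation field (via the identity $\|\widehat\p_{\alpha,\infty}\|\cdot|\kappa|=1$), so the shape parameters $\tau$ (the failure of $\p_1$ and $\p_2$ to coincide) and $\sigma$ (the length of the transverse class) must satisfy $\tau\to0$ and, since the torus collapses to a point rather than a circle, $\sigma/(\rho\tau)\to0$; on the other hand exact flatness of the limit forces the derivative bounds $\rho\sigma_\rho/\sigma\to0$ and $\rho^2\tau_\rho/\sigma\to0$, and L'Hospital's rule shows these requirements are mutually inconsistent. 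Some argument of this analytic type---comparing the decay of $\tau$ demanded by codimension-2 collapse with the size of $\tau_\rho$ permitted by flatness of the limit---is what must replace the apex-angle claim.
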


We will see that the standard $\R^4$ admits a complete conical Riemannian metric $g_j$ with asymptotic curvature $A(\R^4, g_j)\leq j^{-1}$ and asymptotic cone $\mathbb H$, but by Theorem \ref{thm}  there is no complete conical $\mathcal{AF}$ metric on $\R^4$ with asymptotic cone $\mathbb H$. This also gives a negative answer to a version of the gap question in \cite{PT} (Question 2). 

\

Now we briefly sketch the idea behind the proof of Theorem \ref{conj:PT}. Suppose $(M, g)$ is a conical $\mathcal{AF}$ 4-manifold with a simply-connected end whose asymptotic cone is $\mathbb H$, we will derive a contradiction.  General theory allows us to reduce to the case when the end admits a $T^2$ action and the metric $g$ is $T^2$ invariant. Such a metric $g$ then yields a family of flat metrics $G$ on the  2-torus parametrized by $(\rho, z)\in \mathbb H=\mathbb R_{\geq0}\times \R$ for $\rho^2+z^2\geq R^2$. For fixed $z$ as $\rho\rightarrow  0$ the metric $G$ degenerates as in a specific model situation that we can understand. Fix  an interval $I\subset(0, \pi)$ and consider the sector region $z=\rho \cot\theta$ for $\theta\in I$.  As $\rho\rightarrow\infty$, the 2-torus endowed with the rescaled metric $\rho^{-1}G$  converges to a point in the Gromov-Hausdorff sense. We introduce two quantities $\sigma$ and $\tau$ to each metric $G$. Roughly speaking, $\sigma$ measures the area of the 2-torus under the metric $\rho^{-1}G$, and $\tau$ measures the deviation of the metric $\rho^{-1}G$ from a model flat metric. The fact that the asymptotic cone is $\mathbb H$ gives control on the asymptotics of quantities involving $\tau$ and $\sigma$ as $\rho\rightarrow\infty$. Then an elementary calculus argument yields a contradiction.

\

\textbf{Acknowledgements:} Both authors were partially supported by the Simons Collaboration Grant in Special Holonomy. The first author is grateful to the IASM at Zhejiang University for hospitality during his visit in Spring 2024, and would like to thank Prof. Guofang Wei for sharing Abresch's paper \cite{Abresch}. We thank John Lott for helpful comments that improved the exposition of the paper. 

\section{Proof of the main result}
\label{sec:proof}
Suppose $(M, g, p)$ is a conical $\mathcal{AF}$ manifold with a simply-connected end and with asymptotic cone $\mathcal C_\infty=\mathbb H$.  We will derive a contradiction. 

First we introduce some notations. Let $(\rho, z)$ be the standard coordinates on $\mathbb H\equiv \R_{\geq 0}\times \R$.  Denote by $g_0=d\rho^2+dz^2$ the standard metric on $\mathbb H$ and by $r_0$ the distance function to the origin. 
For $j>0$ write $A_j\equiv \overline B_g(p, 2^{j+1})\setminus B_g(p, 2^{j-1})$ and denote by $\widetilde A_j$ the manifold $A_j$ endowed with the rescaled metric $g_j\equiv 2^{-2j}g$.  By Theorem A of \cite{PT} we know $A_j$ is simply connected for $j$ large. 

As initial steps we make a few reductions using general theory. We first prove a smoothing result.

\begin{lemma}
	\label{p:smoothing}
	For any given $k_0>0$ there is a conical $\mathcal{AF}$ Riemannian metric $g'$ on $M$ with asymptotic cone $\mathbb H$ such that for each $k\leq k_0$, $|\nabla_{g'}^k Rm_{g'}|_{g'}=o(r^{-k-2})$ as $r\rightarrow 0$.  
\end{lemma}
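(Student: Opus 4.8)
The plan is to smooth $g$ scale by scale and patch the pieces together, using Abresch's local smoothing theorem \cite{Abresch} as the essential tool. The idea is that after rescaling each dyadic annulus to unit size the metric becomes almost flat, so a single local smoothing with good derivative estimates applies with scale-independent constants, and the required decay of the curvature derivatives then falls out of the scaling. Concretely, set $\epsilon_j\equiv\sup_{A_j} r^2|Rm_g|$, so that $\epsilon_j\to 0$ as $j\to\infty$ by the hypothesis $A(M,g)=0$. On the rescaled annulus $\widetilde A_j=(A_j,g_j)$ one has $|Rm_{g_j}|_{g_j}\le C\epsilon_j$, and the $g_j$-diameter together with the relevant collapsing/injectivity data of $\widetilde A_j$ are uniformly controlled as $j$ varies.

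First I would apply the local smoothing theorem to $g_j$ on a slightly enlarged annulus $\widetilde A_j'\supset\widetilde A_j$, producing a metric $\widetilde g_j'$ with
$$e^{-\Psi(\epsilon_j)}g_j\le\widetilde g_j'\le e^{\Psi(\epsilon_j)}g_j,\qquad |\nabla^k_{\widetilde g_j'}Rm_{\widetilde g_j'}|_{\widetilde g_j'}\le C_k\,\epsilon_j\quad(0\le k\le k_0),$$
where $\Psi(\epsilon)\to 0$ as $\epsilon\to 0$ and $C_k=C_k(k_0)$. Here I am using that Abresch's construction produces such a smoothing out of a single curvature bound, with $C^0$-closeness and all derivative bounds governed by that bound, and that both scale as indicated under the dilation $g_j=2^{-2j}g$.

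Next I would patch the $\widetilde g_j'$ across consecutive scales. On each overlap $\widetilde A_j'\cap\widetilde A_{j+1}'$ both smoothings are $C^0$-close to $g_j$ (up to the fixed factor $2^{\pm2}$ relating $g_j$ and $g_{j+1}$), hence close to one another, so with a partition of unity $\{\chi_j\}$ subordinate to $\{A_j'\}$ whose derivatives are controlled in the rescaled metric, the interpolation $g'\equiv\sum_j\chi_j\,(2^{2j}\widetilde g_j')$ is a smooth metric on the end, equal to $g$ on a fixed compact set. The interpolation error in the curvature derivatives is again controlled, in the rescaled picture, by $\max(\epsilon_{j-1},\epsilon_j,\epsilon_{j+1})$. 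Rescaling back, on $A_j$ this yields
$$|\nabla^k_{g'}Rm_{g'}|_{g'}\le C_k\,2^{-(k+2)j}\epsilon_j,$$
and since $r\asymp 2^j$ on $A_j$ and $\epsilon_j\to 0$ we get $|\nabla^k_{g'}Rm_{g'}|_{g'}=o(r^{-k-2})$ for every $k\le k_0$; taking $k=0$ shows in particular $A(M,g')=0$, so $g'$ is $\mathcal{AF}$.

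Finally I would verify that the asymptotic cone is unchanged. Since $e^{-\Psi(\epsilon_j)}g\le g'\le e^{\Psi(\epsilon_j)}g$ on $A_j$ with $\Psi(\epsilon_j)\to 0$, the identity map $(M,\lambda^{-2}g')\to(M,\lambda^{-2}g)$ is $(1+o(1))$-bi-Lipschitz on larger and larger regions as $\lambda\to\infty$, so the two pointed rescalings share the same Gromov--Hausdorff limit; hence $g'$ is conical with asymptotic cone $\mathbb H$. I expect the main obstacle to be the bookkeeping in the patching step: one must ensure that the local smoothings at neighboring scales are close enough and that the partition-of-unity interpolation does not spoil the $o(r^{-k-2})$ decay. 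This is exactly where it is essential that every estimate is expressed through the single vanishing quantity $\epsilon_j$ rather than a mere uniform bound, and where the uniform geometry of the rescaled annuli is needed so that the local smoothing can be applied with constants independent of $j$.
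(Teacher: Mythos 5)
Your overall architecture --- dyadic rescaling, Abresch-type local smoothing on each rescaled annulus $\widetilde A_j$, cut-off patching, rescaling back --- is the same as the paper's. The gap lies in the input you assume from the smoothing theorem and, as a consequence, in your treatment of the asymptotic cone. You assert that the smoothing satisfies $e^{-\Psi(\epsilon_j)}g_j\le\widetilde g_j'\le e^{\Psi(\epsilon_j)}g_j$ with $\Psi(\epsilon_j)\to 0$, i.e.\ that the $C^0$-closeness to $g_j$ improves as the curvature bound improves while the derivative constants $C_k$ stay uniform. The local smoothing results of Abresch \cite{Abresch} and \cite{PWY} do not give this: the closeness parameter $\epsilon_0$ is prescribed in advance and the constants in the derivative estimates depend on $\epsilon_0$ (deteriorating as $\epsilon_0\to 0$), so one cannot simultaneously send the closeness to zero and keep the derivative bounds. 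The paper accordingly fixes a small $\epsilon$ and obtains only $(1-\epsilon)g\le g'\le(1+\epsilon)g$. With a fixed bi-Lipschitz equivalence your final step collapses: the rescalings of $g'$ converge (along subsequences) to a space homeomorphic to $\mathbb H$ but a priori carrying a different metric $d_\infty$, and there is no reason it is a metric cone, let alone isometric to $\mathbb H$.

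That verification is where the bulk of the paper's proof lives, and your proposal has nothing in its place. The paper analyzes the equivariant limits of local universal covers: by the Sublemma and Key Lemma of \cite{PT} the local symmetry group $\Lambda_\infty$ is generated by a rotation in a $2$-plane $W$ and a translation in $V$; since $\Lambda_\infty$ contains an $S^1$-factor, the limit of the $g'$-geometry must be invariant under the same group, which forces the limit balls to be flat with totally geodesic boundary away from the origin, so the limit is a flat wedge; propagating the decompositions $W_\alpha\oplus V_\alpha$ along a chain of balls joining the two boundary rays then pins the wedge angle to $\pi$, i.e.\ the cone is $\mathbb H$. You would need either to supply such an argument or to justify a quantitatively sharper smoothing (say, a short-time local Ricci flow giving $|g'-g|_g\le C\eta_j$) than the cited theorems provide. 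A smaller issue: your bound $|\nabla^k Rm|\le C_k\epsilon_j$ is also stronger than what the smoothing yields (only $\le C_k$ on the rescaled annulus); the required smallness for $0<k<k_0$ should instead come from interpolating between the $C^0$ smallness $2\eta_j$ and the $C^{k_0}$ bound.
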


\begin{remark}
Notice that this higher regularity is automatic if we assume $g$ satisfies elliptic curvature equations, for example, if we assume $g$ is Ricci-flat, or if we assume $g$ is scalar-flat and anti-self-dual (which includes the case of being scalar-flat K\"ahler).\end{remark}

\begin{proof}
	Fix $\epsilon>0$ small to be determined below. Denote by $\eta_j$ the maximum of $|Rm_{g_j}|$ on $\widetilde A_{j}\cup \widetilde A_{j-1}\cup \widetilde A_{j+1}$. By assumption we have $\eta_j\rightarrow 0$ as $j\rightarrow\infty$. 
	By the local smoothing result of Abresch \cite{Abresch} (more precisely, by iterating the proof of Theorem 1.1 in \cite{PWY} for finitely many steps) to each rescaled annuli $\widetilde A_j$, one can find a new Riemannian metric $g'_j$ on $\widetilde A_j$ with $(1-\epsilon)g_j\leq g'_j\leq (1+\epsilon)g_j$, $|Rm(g'_j)|\leq 2 \eta_j$ and $|\nabla^k_{g'_j}Rm(g'_j)|\leq C_k$ for each $k\leq k_0$. Scale $g'_j$ back to $A_j$ and glue the adjacent $g'_j$ by suitable cut-off functions (as in \cite{CT}) we obtain a metric $g'$ on $M$ such that $(1-\epsilon)g\leq g'\leq (1+\epsilon)g$ and $|\nabla^k_{g'}Rm_{g'}|_{g'}=o(r^{-2-k})$ as $r\to\infty$ for all $k\leq k_0$.

It remains to show that $g'$ is asymptotic to $\mathbb H$ if $\epsilon$ is chosen to be small. Denote by $\widetilde A_j'$ the manifold $A_j$ endowed with the rescaled metric $g'_j\equiv 2^{-2j}g'$. Due to the uniform equivalence between $g$ and $g'$,  by passing to a subsequence we may assume $(\widetilde A_j', g'_j, p)$ converges to a pointed Gromov-Hausdorff limit space $(\mathcal C, O)$, which is isometric to $\mathbb H$ endowed with a possibly different metric $d_\infty$. In particular $\mathcal C$ is homeomorphic to $\mathbb H$ but a priori we do not know if $\mathcal C$ is a metric cone. 

By the assumption on $(M, g)$  locally for any $q_j\in \widetilde A_j$ converging to $q_\infty\in \mathbb H$ we may find $r$ small such that the universal cover $(\widehat B_j, \widehat g_j)$ of $B_j\equiv B_{g_j}(q_j,r)\subset \widetilde A_j$ is uniformly non-collapsed and there is an equivariant $C^{1, \alpha}$ Gromov-Hausdorff convergence of $(\widehat B_j, \widehat g_j, \Lambda_j, \widehat{q}_j)$ to a non-collapsing flat limit space $(\widehat B_\infty, \widehat g_\infty, \Lambda_\infty, \widehat{q}_\infty)$, where $\Lambda_j=\pi_1(B_j)$ such that  $(\widehat B_\infty, \widehat g_\infty)/\Lambda_\infty$ is identified with the ball $B_\infty\equiv B_{g_0}(q_\infty, r)$ in $\mathbb H$. We may isometrically immerse $\widehat B_\infty$ into $\R^4$. Then the group $\Lambda_\infty$ descends to a nilpotent (hence abelian) subgroup $\Lambda$ of the group of Euclidean motions of $\R^4$. Since $A_j$ is simply connected we know by \cite{Rong} that $\Lambda_\infty$ is connected.  By the Sublemma in \cite{PT} there is an orthogonal decomposition $\mathbb{R}^4=V\oplus W$ such that $\Lambda_\infty\subset Rot(W)\oplus Trans(V)$. Using the Key Lemma in \cite{PT} (in the statement of the Key Lemma in \cite{PT} we know $l<m$, which is ultimately due to the injectivity radius estimate in \cite{PRT}) we see that $\dim W=\dim V=2$, and $\Lambda$ is generated by a rotation in $W$ and a translation in $V$, so it is abstractly isomorphic to $S^1\times \R$. It is also easy to identify the $\Lambda$ for different choices of $q_\infty$.

Now choose $q_\infty\in \p\mathbb H$. Then we can identify the corresponding $\Lambda_\infty$ with $\Lambda$.  Denote by $\widehat g_j'$ the induced metric on $\widehat B_j$ from $g'_j$. Then we may assume the metric $\widehat g_j'$ also converges  to a flat metric $\widehat g_\infty'$ on $\widehat B_\infty$ which is also $\Lambda$ invariant and the quotient $(\widehat B_\infty, \widehat g_\infty')/\Lambda_\infty$ is identified with the corresponding ball $B_\infty'$ in $(\mathbb H, d_\infty)$.  Similarly by the Sublemma in \cite{PT} we may assume that with respect to the metric $\widehat g_\infty'$ we also have $\Lambda\subset Rot(W')\oplus Trans(V')$. Since $\Lambda$ contains a factor isomorphic to $S^1$ it follows that $\Lambda_\infty$ is also generated by a rotation in $W'$ and another element $v$ in $Rot(W')\oplus Trans(V')$. Without loss of generality we may assume  $v$ is a  translation in $V'$. It then follows that $B_\infty'$ must be flat hence $\mathcal C$ is flat  and $\p\mathcal C$ is totally geodesic (hence a straight line) away from $O$.  So $\mathcal C$ is isometric to a wedge. To see $\mathcal C$ is indeed isometric to $\mathbb H$, we choose a path $\gamma(t)$ in the interior of $\mathbb H$ connecting $q_\infty$ and $-q_\infty$. One can cover $\gamma(t)$ by finitely many small balls $\{D_\alpha\}, \alpha=1, \cdots, s$, where $q_\infty\in D_1$, $-q_\infty\in D_s$ and $D_{\alpha}\cap D_{\alpha+1}\neq\emptyset$. Now each $D_\alpha$ arises as the quotient of the form $\widehat D_{\alpha, \infty}/\Lambda_{\infty, \alpha}$, where $\widehat D_{\alpha, \infty}$ is the flat limit of local universal covers of corresponding balls in $\widetilde A_j'$ and $\Lambda_{\infty, \alpha}$ can be identified as a subgroup of $Rot(W_\alpha)\oplus Trans(V_\alpha)$ for an orthogonal decomposition $\mathbb R^4=W_\alpha\oplus V_\alpha$, under a local isometric immersion of $D_{\alpha, \infty}$ into $\R^4$. Now by considering the intersection $D_\alpha\cap D_{\alpha+1}$ and starting from $\alpha=1$ one can naturally all $W_\alpha$ and $V_\alpha$ for different $\alpha$, and it follows that  $\Lambda_{\infty, \alpha}$ is generated by a common rotation and translation. From this it is easy to see that $\mathcal C$ is isometric to $\mathbb H$.

	\end{proof}

 We will assume from now on that  $g'=g$. Next we apply the Cheeger-Fukaya-Gromov theory to reduce the case when the end of $g$ is $T^2$ invariant. 
\begin{lemma}\label{prop:CFG}
		There is an $R>0$, an effective $G=T^2$ action on $M^\circ\equiv M\setminus B(p, R)$, a $G$-invariant Riemannian metric $\widecheck g$ on $M^\circ$ and a surjective continuous map $F: M^{\circ}\simeq \mathbb H^\circ\equiv \mathbb H\setminus K$ for some compact $K$, such that the following hold
	\begin{itemize}
		\item $\widecheck g$ is close to $g$ in the sense that for each $k\leq k_0$, $|\nabla_g^k(\widecheck g-g)|_g=o(r^{-k})$ as $r\rightarrow\infty$; in particular, $\widecheck g$ is also conical $\mathcal{AF}$ with asymptotic cone $\mathbb H$. 
		\item For each $x\in \mathbb H^\circ$, $F^{-1}(x)$ consists of exactly one $G$ orbit and $M^\circ/G$ is a smooth manifold. Moreover, $F$ induces a diffeomorphism between $M^\circ/G$ and $\mathbb H^\circ$, and $\widecheck g$ descends to a smooth Riemannian metric $g_b$ on the quotient $M^\circ/G$  (in the sense of manifold with boundary). 
		\item Denote by $P\equiv P_1\sqcup P_2$ the two boundary components of $\mathbb H^\circ$, then the $G$ action is free over $F^{-1}(\mathbb H^\circ\setminus P)$ and points in $F^{-1}(P_\alpha)$ have stabilizers isomorphic to a 1-dimensional subgroup $G_\alpha\subset T^2$ with $G_1\cap G_2=\{e\}$. 
		\item $F$ is an asymptotic isometry in the sense that under the above identification between $M^\circ/G$ and $\mathbb H^\circ$, we have  for each $k\leq k_0$, $|\nabla_{g_0}^k(g_b-g_0)|_{g_0}=o(r_0^{-k})$ as $r_0\rightarrow\infty$. 
	\end{itemize}
\end{lemma}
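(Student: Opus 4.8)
The plan is to run the Cheeger--Fukaya--Gromov (CFG) structure theory for collapse with bounded curvature on a neighborhood of infinity, in an equivariant and boundary-sensitive form, and then read off the listed properties from the two-dimensional model limit. Having replaced $g$ by the smoothed metric of Lemma \ref{p:smoothing}, each rescaled annulus $\widetilde A_j$ has uniformly bounded geometry to all controlled orders, with $\sup_{\widetilde A_j}|\nabla_{g_j}^k Rm_{g_j}|_{g_j}\to 0$, so the family $\{\widetilde A_j\}$ collapses with bounded geometry to the two-dimensional limit $\mathbb H$, dropping exactly two dimensions. I would first invoke CFG \cite{CFG}, applied to the collapsing family $\{\widetilde A_j\}$ and patched over the end using the canonicity of the structure, to produce on $M^\circ=M\setminus B(p,R)$ a nilpotent Killing structure adapted to this collapse together with an invariant metric $\widecheck g$. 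The local analysis already carried out inside the proof of Lemma \ref{p:smoothing} (via the Sublemma and Key Lemma of \cite{PT}, and the injectivity radius estimate of \cite{PRT}) identifies the local limit group as $\Lambda\cong S^1\times\mathbb R$, which is abelian; hence the Killing structure is abelian and integrates to a local $T^2$ symmetry, of rank $2$ over the interior and degenerating to rank $1$ over $\partial\mathbb H$, with generic orbit a flat $2$-torus. The invariant-metric closeness $|\nabla_g^k(\widecheck g-g)|_g=o(r^{-k})$ is then the standard CFG estimate: on $\widetilde A_j$ one has $|\nabla_{g_j}^k(\widecheck g_j-g_j)|_{g_j}=o(1)$ since the orbits collapse at scale $o(1)$, and rescaling back (each derivative costing a factor $2^{-jk}\sim r^{-k}$) produces the stated decay. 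This yields the first bullet.

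The central task is to globalize these local torus symmetries into a single effective $G=T^2$ action on all of $M^\circ$. Over the interior $F^{-1}(\mathbb H^\circ\setminus P)$ the action is free, giving a principal $T^2$-bundle over the base; the only obstruction to assembling the local actions is the holonomy of the N-structure, which is controlled because $\mathbb H^\circ$ is contractible and the ends $A_j$ are simply connected (Theorem A of \cite{PT}, together with \cite{Rong}, forcing the local limit groups to be connected). Over $\partial\mathbb H$ the local model from Lemma \ref{p:smoothing} shows that the rotation factor of $\Lambda$ survives as an honest circle with $1$-dimensional fixed locus, so the $T^2$ action extends across the axis with stabilizer a $1$-parameter subgroup $G_\alpha\subset T^2$ over each boundary ray $P_\alpha$ and orbit $T^2/G_\alpha\cong S^1$. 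I expect this globalization --- carried out \emph{uniformly} over the whole non-compact end and made to extend smoothly across the collapsing axis --- to be the main obstacle, since it is precisely here that the \emph{global} (rather than purely local, cf.\ \cite{PT}) nature of the hypothesis enters.

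Granting the global $T^2$ action, the remaining bullets follow by quotient analysis. The free locus descends to a smooth surface, and the normal form of a $T^2$-action near a circle of $S^1$-stabilizers shows that the axis contributes a smooth boundary; hence $M^\circ/G$ is a smooth manifold with boundary and $\widecheck g$ descends to a smooth Riemannian metric $g_b$. The identification $M^\circ/G\cong\mathbb H^\circ$ and the asymptotic isometry $|\nabla_{g_0}^k(g_b-g_0)|_{g_0}=o(r_0^{-k})$ come from upgrading the pointed Gromov--Hausdorff convergence $\widetilde A_j\to\mathbb H$ to smooth convergence of the quotient metrics, which is legitimate because the collapse has bounded geometry to all controlled orders; unwinding the rescalings converts the $C^k$-convergence of the (rescaled) quotient metrics to flat $g_0$ into the stated decay.

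It remains to see $G_1\cap G_2=\{e\}$. Over a cross-sectional arc joining $P_1$ to $P_2$ through the interior of $\mathbb H^\circ$, the preimage is built from $T^2\times[0,1]$ by collapsing $G_1$ at one end and $G_2$ at the other, so the fundamental group of this piece --- and hence of the simply-connected end --- is $\mathbb Z^2/\langle v_1,v_2\rangle$, where $v_\alpha$ is the primitive class of $G_\alpha$ in $\pi_1(T^2)$. Simple-connectedness (again Theorem A of \cite{PT}) forces $\{v_1,v_2\}$ to be a $\mathbb Z$-basis, which is exactly $G_1\cap G_2=\{e\}$; equivalently, this cross-section is the simply-connected $S^3$ with its genus-one Heegaard splitting. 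This gives the last bullet.
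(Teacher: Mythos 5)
Your proposal follows essentially the same route as the paper's (sketched) proof: apply Cheeger--Fukaya--Gromov to the collapsing annuli $\widetilde A_j$ to get a pure $\mathcal N$-structure, use simple-connectedness and Rong's theorem to upgrade it to a $T^2$ action, glue the structures over the end, obtain $\widecheck g$ from the CFG averaging construction, and deduce $G_1\cap G_2=\{e\}$ from simple-connectedness of the end. Your explicit fundamental-group computation $\pi_1\cong\mathbb Z^2/\langle v_1,v_2\rangle$ for the last bullet is a correct unpacking of what the paper asserts in one line, so the two arguments agree in substance.
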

\begin{proof}
This is essentially a standard application of the Cheeger-Fuakay-Gromov theory, as in \cite{SZ}. We only give a sketch of the argument here.  One first works with the collapsing annuli $\widetilde A_j$ and apply \cite{CFG} to obtain an $\mathcal N$-structure in $\widetilde A_j$ (slightly shrinking if necessary) for $j$ large. The $\mathcal N$-structure is \emph{pure} since we have a diameter bound. By hypothesis  each $\widetilde A_j$ is simply-connected so by Rong \cite{Rong} the pure $\mathcal N$-structure is given by an action of $T^2$. Points with codimension 1-stabilizers give rise to boundary in the quotient. Again the simply-connectedness implies that the two boundary components correspond to two different stabilizer groups $G_1$ and $G_2$ with $G_1\cap G_2=\{e\}$. Then we can glue together the structures on different $\widetilde A_j$ to obtain a $T^2$ action on $M^\circ$ for $R$ large. 
The construction and properties of the invariant metric $\widecheck g$ follow from the averaging construction in \cite{CFG}. 
\end{proof}

Now we fix $k_0=10$. Without loss of generality we will assume $\widecheck g=g$ 
in the following. 
The above discussion only makes use of the topology of the end and of the asymptotic cone. Now we investigate the \emph{asymptotical flat} geometry in more detail. Introduce the following notations. 
\begin{itemize}
	\item We write the decomposition $G=G_1\times G_2$  and denote by $\p_{\alpha}$ the infinitesimal generator of $G_\alpha$, normalized to have period $2\pi$.
	\item Fix $\epsilon>0$ small, we may  cover $\mathbb H$ by 2 open subsets $\mathbb U_1=\{0\leq\theta<\frac\pi 2+\epsilon\}$ and $\mathbb U_2=\{\frac\pi 2-\epsilon<\theta\leq \pi\}$. Then we get an open cover of $M^\circ$ by $U_\alpha\equiv F^{-1}(\mathbb U_\alpha), \alpha=1, 2$. 
	\item Denote $U_{\alpha, j}\equiv U_\alpha\cap \widetilde A_j$, which is endowed with the rescaled metric $g_j$, so $\widetilde A_j=U_{1, j}\cup U_{2, j}$. Note that $U_{\alpha,j}$ is converging to $\mathbb{U}_{\alpha}\cap\mathbb{A}$.
	\item Denote $H_\alpha=\pi_1(U_{\alpha, j})$. Then $H_\alpha$ is isomorphic to $\mathbb Z$ and is generated by the orbit of the $G_{3-\alpha}$ action. Notice that since the $G_\alpha$ action on $U_{\alpha, j}$ has fixed points the $G_\alpha$ orbits are homotopically trivial in $U_{\alpha, j}$.
	\item Denote $\mathbb V\equiv\mathbb U_1\cap \mathbb U_2$ and $V\equiv F^{-1}(\mathbb V)$. Denote the overlap by $V_j\equiv U_{1, j}\cap U_{2, j}$, then $\pi_1(V_j)=H_{1}\times H_2$. Note that $V_{j}$ is converging to $\mathbb{V}\cap\mathbb{A}$.
\end{itemize}
See Figure \ref{Figure:cover the half plane} and Figure \ref{Figure:Singular fibration} for pictures of the singular fibration $F$.

\begin{figure}[ht]
	\begin{tikzpicture}[scale=0.6]
	
		\draw(0,-5)--(0,-1) arc (270:380:1) --(6,2.4);
		\draw(0,5)--(0,1) arc (-90:-200:-1) --(6,-2.4);
		\node at (5, 3) {$\mathbb{U}_2$};
		\node at (5, -3) {$\mathbb{U}_1$};

		\node at (-0.5, -4) {$P_1$};
		\node at (-0.5, 4) {$P_2$};

		\node at (5,0) {$\mathbb{V}$};

		\filldraw[color=red!50,fill=red!50,opacity=0.5](0,-5)--(0,-1) arc (270:380:1) --(6,2.4)--(6,-5)--(0,-5);
		\filldraw[color=blue!50,fill=blue!50,opacity=0.5](0,5)--(0,1) arc (-90:-200:-1) --(6,-2.4)--(6,5)--(0,5);

		\filldraw[color=purple!50,fill=purple!50,opacity=0.5](0,4)--(0,2) arc (-90:-270:-2) --(0,-4) arc (90:270:-4)--(0,4);

		\node at (3,0) {$\mathbb{A}$};

	\end{tikzpicture}
	\caption{Cover $\mathbb{H}^\circ$ by $\mathbb{U}_1$ and $\mathbb{U}_2$}
	 \label{Figure:cover the half plane}
\end{figure}

\begin{figure}[ht]
	\begin{tikzpicture}[scale=0.6]
	
		\draw(8,0)--(1,0) arc 
		[
			start angle=0,
			end angle=110,
			x radius=2cm,
			y radius=1cm
		]  --(-4.8,4);

		\filldraw[color=red!50,fill=red!50,opacity=0.5](8,0)--(1,0) arc 
		[
			start angle=0,
			end angle=110,
			x radius=2cm,
			y radius=1cm
		]  --(-4.8,4);

		\filldraw[color=blue!50,fill=blue!50,opacity=0.5](-1,4)--(-1,1) arc 
		[
			start angle=90,
			end angle=110,
			x radius=2cm,
			y radius=1cm
		]  --(-4.8,4);

		\node at (2, 1) {$\mathbb{U}_i$};
		\node at (4, -0.5) {$P_i$};
		\node at (-2,2) {$\mathbb{V}$};

		\draw[dashed, ->] (-3.5,5) -- (-3.5,3);
		\node at (-3.5,2.7) {$\bullet$};
		\draw (-2.5,7.2) arc
    	[
        	start angle=0,
        	end angle=360,
        	x radius=1cm,
        	y radius=2cm
    	];
		\draw (-3.5, 7.5) to [out=330, in=30] (-3.5, 6.5);
		\draw (-3.3,8) arc
    	[
        	start angle=110,
        	end angle=250,
        	x radius=0.5cm,
        	y radius=1cm
    	];
		\draw[densely dotted] (-3.6,7) arc
    	[
        	start angle=0,
        	end angle=360,
        	x radius=0.43cm,
        	y radius=0.2cm
    	];

		\draw[dashed, ->] (-0,4) -- (-0,2);
		\node at (-0,1.7) {$\bullet$};
		\draw (1,6.2) arc
    	[
        	start angle=0,
        	end angle=360,
        	x radius=1cm,
        	y radius=2cm
    	];
		\draw (-0.05, 7) to [out=310, in=50] (-0.05, 5.2);
		\draw (0.2,7.4) arc
    	[
        	start angle=120,
        	end angle=240,
        	x radius=1cm,
        	y radius=1.5cm
    	];
		\draw[densely dotted] (-0.3,6) arc
    	[
        	start angle=0,
        	end angle=360,
        	x radius=0.35cm,
        	y radius=0.17cm
    	];

		\draw[dashed, ->] (3.5,2.8) -- (3.5,0.8);
		\node at (3.5,0.5) {$\bullet$};
		\draw (4.5,5) arc
    	[
        	start angle=0,
        	end angle=360,
        	x radius=1cm,
        	y radius=2cm
    	];
		\draw (3.45, 6.4) to [out=310, in=50] (3.45, 3.6);
		\draw (3.6,6.6) arc
    	[
        	start angle=130,
        	end angle=230,
        	x radius=1.6cm,
        	y radius=2.1cm
    	];
		\draw[densely dotted] (3.05,4.7) arc
    	[
        	start angle=0,
        	end angle=360,
        	x radius=0.26cm,
        	y radius=0.13cm
    	];
		\draw[<-] (3.5,5.6) arc
    	[
        	start angle=300,
        	end angle=130,
        	x radius=0.9cm,
        	y radius=0.2cm
    	];
		\draw[<-] (4,2.8) arc
    	[
        	start angle=300,
        	end angle=230,
        	x radius=1cm,
        	y radius=2cm
    	];
		\node at (4.6,2.1) {$G_{3-i}$};
		\node at (1.8,5.2) {$G_i$};

		\draw[dashed, ->] (7,2.3) -- (7,0.3);
		\node at (7,0) {$\bullet$};
		\draw (7.6,4.2) arc
    	[
        	start angle=0,
        	end angle=360,
        	x radius=0.6cm,
        	y radius=1.6cm
    	];

		\draw[dashed, ->] (4.7,9) -- (-2.5,9);
		\draw[dashed, ->] (4.7,8.7) -- (1,8);
		\draw[dashed, ->] (5,8.6) -- (4,7);
		\node at (7.6,9) {smooth fibers $T^2$};

		\draw[dashed, ->] (9,5) -- (8,5);
		\node at (12,5) {singular fibers $S^1$};
		\node at (12,4) {isotropy group $G_i$};

	\end{tikzpicture}
	\caption{Singular fibration over $\mathbb{U}_i$}
	 \label{Figure:Singular fibration}
\end{figure}
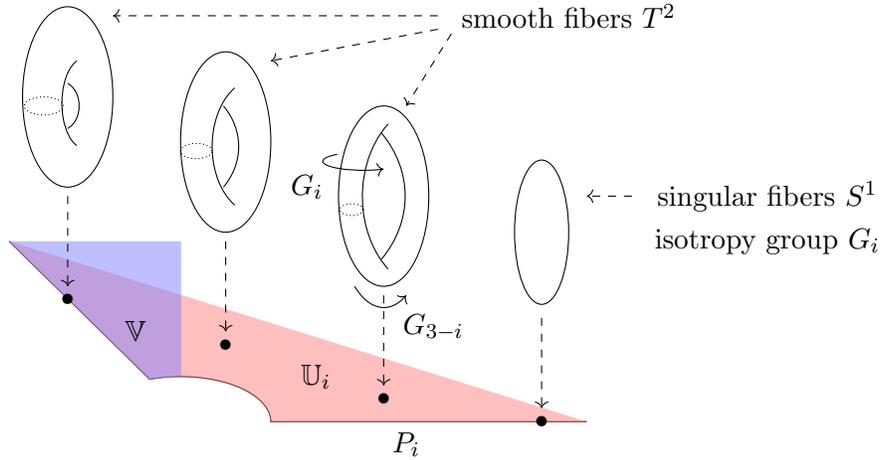

Now consider the convergence of $\widetilde A_j$ to $\mathbb A=\{1/4\leq \rho^2+z^2\leq 4\}\subset \mathbb H$. Denote the universal cover of $U_{\alpha, j}$ by $\widehat U_{\alpha, j}$. Then we see that for each $\alpha$, $\widehat U_{\alpha, j}$ (with the induced metric from $g_j$) is volume non-collapsing. Furthermore, $\p_1$ and $\p_2$ both lift  to Killing fields on $\widehat U_{\alpha, j}$, which we denote by $\widehat\p_{1; \alpha, j}$ and $\widehat\p_{2; \alpha, j}$ respectively. Notice that on $\widehat U_{\alpha, j}$ the vector field $\widehat\p_{\alpha; \alpha, j}$ still generates an $S^1$ action which descends to the action of $G_\alpha$ on $U_{\alpha, j}$.  Passing to subsequences we may assume that $(\widehat U_{\alpha, j}, H_{\alpha})$ converges equivariantly in the $C^{k_0}$ Cheeger-Gromov topology  to a 4-dimensional flat manifold $(\widehat U_{\alpha,\infty}, H_{\alpha,\infty})$ so that $\mathbb U_\alpha\cap\mathbb{A}$ is isometric to $\widehat U_{\alpha,\infty}/ H_{\alpha,\infty}$. 

Since we know the collapsing to $\mathbb A$ is along the $G$ orbits, we see that $H_{\alpha,\infty}$ is generated by two commuting Killing fields both of which are limits of linear combinations of $\widehat\p_{1; \alpha, j}$ and $\widehat\p_{2; \alpha, j}$.
Now since the $G_\alpha$ action  lifts to $\widehat U_{\alpha, j}$  it also acts on the limit $\widehat U_{\alpha,\infty}$, hence $\widehat \p_{\alpha; \alpha, j}$ naturally converges to an element $\widehat\p_{\alpha,\infty}$ in the Lie algebra $\mathfrak h$ of ${H}_{\alpha,\infty}$ with period $2\pi$.  Locally it is given by a rotation field in a 2-dimensional plane $W\subset \R^4$ after immersion into $\mathbb{R}^4$.
Given any other Killing field $\xi\in \mathfrak h$, since it commutes with $\widehat\p_{\alpha,\infty}$, locally it must be given by $\lambda \widehat\p_{\alpha,\infty}+\xi'$, where $\xi'$ is a Euclidean motion in the plane orthogonal to $W$.
We claim that $\xi'$ can not be a rotation. One way to see this is to apply the Key Lemma in \cite{PT} (again, in the statement of the Key Lemma in \cite{PT} we know $l<m$). 
Therefore we may identify a unique element (up to $\pm1$) $\widehat\xi_{\alpha}\in \mathfrak h$ with $\|\widehat\xi_{\alpha}\|=1$, which is locally a translation vector field, such that $\widehat{\partial}_{\alpha,\infty}$ and $\widehat{\xi}_\alpha$ generate $\mathfrak{h}$.


For  $x\in \widehat U_{\alpha,\infty}$ where $\widehat\p_{\alpha,\infty}$ does not vanish, we denote by $\kappa(x)$ the geodesic curvature of the orbit of the flow of $\widehat\p_{\alpha,\infty}$ at $x$. 

\begin{lemma}
For all such $x$, we have
\begin{equation}\label{e:lengthcurvature}
	\|\widehat\p_{\alpha,\infty}(x)\|\cdot |\kappa(x)|=1.
\end{equation}
\end{lemma}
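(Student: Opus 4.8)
The plan is to reduce \eqref{e:lengthcurvature} to an explicit Euclidean computation via the local isometric immersion, after which the identity falls out of the period normalization. Since $\widehat U_{\alpha,\infty}$ is flat, a neighborhood of $x$ admits an isometric immersion into $\R^4$ under which, as already noted in the discussion above, the flow of $X:=\widehat\p_{\alpha,\infty}$ becomes a one-parameter group of rotations of period $2\pi$ in the plane $W$. A periodic one-parameter subgroup of Euclidean motions of period $2\pi$ is necessarily a pure rotation about an affine axis $A$ (an affine copy of $W^\perp$): a nonzero translational component along the axis would make the flow non-periodic. Hence I may choose Euclidean coordinates in which the flow of $X$ is rotation through angle $t$ in $W$ fixing $A$ pointwise.

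In this model both quantities are immediate. Writing $r:=\mathrm{dist}(x,A)$, the orbit of $X$ through $x$ is a round circle of radius $r$, and since the period is $2\pi$ the angular speed of the flow is exactly $1$, so $\|X(x)\|=r$. On the other hand this orbit is a planar circle of radius $r$ in $\R^4$, whose first Frenet curvature is $1/r$; as the geodesic curvature $\kappa=\|\nabla_T T\|$ (with $T$ the unit tangent) is preserved under the local isometry, we get $|\kappa(x)|=1/r$. Multiplying yields $\|X(x)\|\cdot|\kappa(x)|=r\cdot r^{-1}=1$, which is \eqref{e:lengthcurvature}.

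Equivalently, and without invoking the explicit model, I could argue intrinsically: for a Killing field one has $\nabla_X X=-\tfrac12\nabla\|X\|^2$ and $X\|X\|=0$ (the length is constant along the flow, since the flow is by isometries); writing $L:=\|X\|$ and $T:=X/L$ these give $\nabla_T T=-L^{-1}\nabla L$, so $|\kappa|=\|\nabla L\|/L$ and \eqref{e:lengthcurvature} is equivalent to $\|\nabla L\|=1$. In the flat model $L$ is exactly the distance to the fixed axis $A$, a function of unit gradient, so $\|\nabla L\|=1$.

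The content here is elementary, so there is no serious obstacle; the one point deserving care is the normalization of the constant. It is precisely the period-$2\pi$ normalization of $\widehat\p_{\alpha,\infty}$ that forces the angular speed to equal $1$, and hence the right-hand side to be exactly $1$ rather than some other constant. The remaining inputs---that $\widehat\p_{\alpha,\infty}$ is a genuine rotation rather than a screw motion, and the orthogonal splitting $\R^4=W\oplus W^\perp$---have been established in the discussion preceding the lemma and may be cited directly.
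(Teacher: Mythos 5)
Your reduction to the local Euclidean model is sensible, and the intrinsic reformulation $\|\widehat\p_{\alpha,\infty}\|\cdot|\kappa|=\|\nabla L\|$ with $L=\|\widehat\p_{\alpha,\infty}\|$ is correct. But there is a genuine gap at the one step you yourself identify as the crux: the claim that ``since the period is $2\pi$ the angular speed of the flow is exactly $1$.'' At an arbitrary point $x$ away from the zero set, the local isometric immersion only tells you that $\widehat\p_{\alpha,\infty}$ extends to an infinitesimal rotation of $\R^4$ with \emph{some} angular speed $\omega$ about an affine axis at Euclidean distance $r$ from the image of $x$; your computation then yields $\|\widehat\p_{\alpha,\infty}(x)\|\cdot|\kappa(x)|=\omega r\cdot r^{-1}=\omega$, and the lemma is precisely the assertion that $\omega=1$. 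The period-$2\pi$ normalization is a statement about the flow on the manifold, not about the ambient one-parameter group of Euclidean rotations, and for a merely local immersion the two periods need not agree: on the flat manifold $(\R^4\setminus A)/\mathbb{Z}_2$, with $\mathbb{Z}_2$ acting by the half-turn about a $2$-plane $A$, the generator of the effective period-$2\pi$ circle action is locally a rotation field of angular speed $1/2$, and there $\|X\|\cdot|\kappa|=1/2$. So the identity cannot follow from a purely local computation at $x$; some global input is required.

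The paper supplies that input by anchoring at the zero locus of $\widehat\p_{\alpha,\infty}$ (nonempty here, coming from the $G_\alpha$-fixed points over $P_\alpha$): near a zero $x_0$ the flow fixes $x_0$, so effectiveness of the circle action forces the linearized rotation to have angular speed exactly $1$, which gives the identity for $x$ near $x_0$; it is then propagated to all $x$ by a Gauss--Bonnet argument. Your proof can be repaired in the same spirit with one additional observation: on a flat manifold a Killing field $X$ satisfies $\nabla_Y(\nabla X)=R(Y,X)=0$, so $\nabla X$ is a parallel skew-symmetric endomorphism and its nonzero eigenvalue $\pm\sqrt{-1}\,\omega$ --- equivalently your $\|\nabla L\|$ --- is constant on the connected manifold; evaluating at a zero of $X$, where the period normalization genuinely does pin down $\omega=1$, then gives the identity everywhere. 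As written, however, the proposal establishes only $\|\nabla L\|=\omega$ and asserts $\omega=1$ without justification.
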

\begin{proof}
	Let $x_0$ be a zero point of $\widehat\p_{\alpha,\infty}$. Then we can isometrically embed a neighborhood of $x_0$ into $\R^4$ such that $\widehat\p_{\alpha,\infty}$ is a standard rotation vector field with period $2\pi$. It is clear that the equality holds for $x$ close to $x_0$. Using the Gauss-Bonnet theorem it is easy to see the same also holds for all $x$. 
\end{proof}

Let $\widecheck V_j$ be the universal cover of $V_j$. 
Passing to subsequences again we may assume $(\widecheck V_j, H_1\times H_2)$ converges equivariantly  in the $C^{k_0}$ Cheeger-Gromov topology to a 4-dimensional flat manifold $(\widecheck V_\infty, \widecheck H)$, where $\widecheck H$ is abstractly isomorphic to $\mathbb R^2$. Denote by $\widehat V_{\alpha, j}$ the intermediate covering of $V_j$ associated to the subgroup $H_\alpha\subset H_1\times H_2$. Then $\widehat V_{\alpha, j}$ can be naturally identified as an open subset of $\widehat U_{\alpha, j}$. We may assume $(\widehat V_{\alpha, j}, H_\alpha)$ converges to $(\widehat V_{\alpha,\infty}, H_{\alpha,\infty})\subset (\widehat U_{\alpha,\infty}, H_{\alpha,\infty})$ and there is a $\mathbb Z$-covering map from $\widecheck V_\infty$ to $\widehat V_{\alpha,\infty}$. It follows that $\widecheck H$ is also  generated by a rotation vector field  and a translation vector field. See Figure \ref{Figure:Intermediate covers} for a topological picture of the intermediate covers and the universal cover for $V_j$.

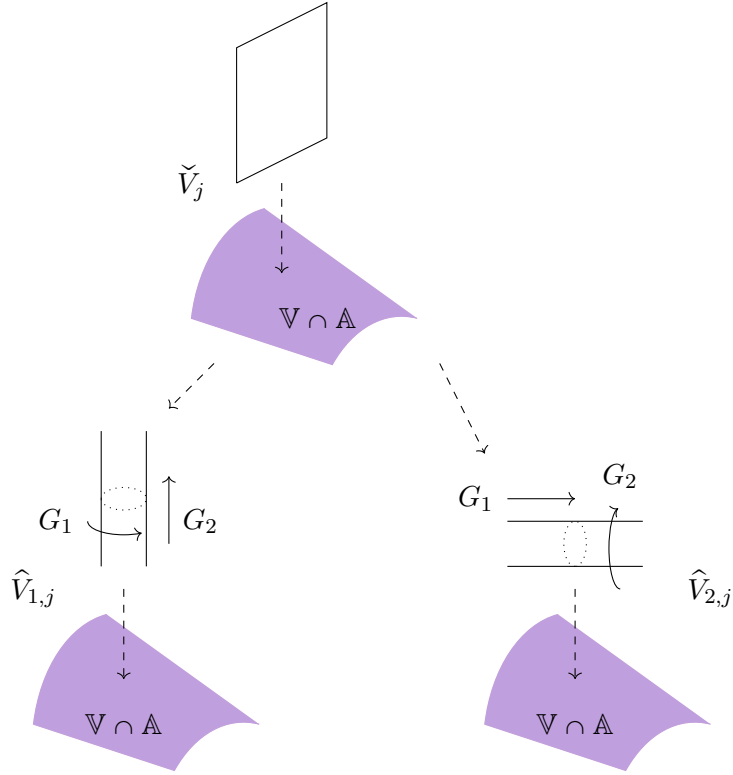
\begin{figure}[ht]
	\begin{tikzpicture}[scale=0.6]
		\filldraw[color=red!50,fill=red!50,opacity=0.5](0,2)
		arc [
        	start angle=100,
        	end angle=40,
        	x radius=-2cm,
        	y radius=3cm
    	]--(-5,2)
		arc [
        	start angle=170,
        	end angle=100,
        	x radius=2cm,
        	y radius=3cm
    	];
		\filldraw[color=blue!50,fill=blue!50,opacity=0.5](0,2)
		arc [
        	start angle=100,
        	end angle=40,
        	x radius=-2cm,
        	y radius=3cm
    	]--(-5,2)
		arc [
        	start angle=170,
        	end angle=100,
        	x radius=2cm,
        	y radius=3cm
    	];  

		\draw[dashed,->] (-3,5)--(-3,3);
		\draw (-3.5,8.5)--(-3.5,5.5);
		\draw (-2.5,8.5)--(-2.5,5.5);
		\draw[dotted] (-3.5,7) 
		arc [
        	start angle=0,
        	end angle=360,
        	x radius=-0.5cm,
        	y radius=0.25cm
    	];

		\draw[->](-3.8,6.5) 
		arc [
        	start angle=0,
        	end angle=-120,
        	x radius=-0.8cm,
        	y radius=0.3cm
    	];
		\node at (-4.5,6.5) {$G_1$};
		\draw[->] (-2,6)--(-2,7.5);
		\node at (-1.3,6.5) {$G_2$};

		\filldraw[color=red!50,fill=red!50,opacity=0.5](10,2)
		arc [
        	start angle=100,
        	end angle=40,
        	x radius=-2cm,
        	y radius=3cm
    	]--(5,2)
		arc [
        	start angle=170,
        	end angle=100,
        	x radius=2cm,
        	y radius=3cm
    	];
		\filldraw[color=blue!50,fill=blue!50,opacity=0.5](10,2)
		arc [
        	start angle=100,
        	end angle=40,
        	x radius=-2cm,
        	y radius=3cm
    	]--(5,2)
		arc [
        	start angle=170,
        	end angle=100,
        	x radius=2cm,
        	y radius=3cm
    	];  

		\draw[dashed,->] (7,5)--(7,3);
		\draw (5.5,5.5)--(8.5,5.5);
		\draw (5.5,6.5)--(8.5,6.5);
		\draw[dotted] (7,5.5) 
		arc [
        	start angle=90,
        	end angle=450,
        	x radius=0.25cm,
        	y radius=-0.5cm
    	];
		\draw[->](8,5) 
		arc [
        	start angle=90,
        	end angle=260,
        	x radius=0.25cm,
        	y radius=-0.9cm
    	];
		\node at (8,7.5) {$G_2$};
		\draw[->] (5.5,7)--(7,7);
		\node at (4.8,7) {$G_1$};

		\draw[dashed,->] (-1,10)--(-2,9);
		\draw[dashed,->] (4,10)--(5,8);

		\filldraw[color=red!50,fill=red!50,opacity=0.5](3.5,11)
		arc [
        	start angle=100,
        	end angle=40,
        	x radius=-2cm,
        	y radius=3cm
    	]--(-1.5,11)
		arc [
        	start angle=170,
        	end angle=100,
        	x radius=2cm,
        	y radius=3cm
    	];
		\filldraw[color=blue!50,fill=blue!50,opacity=0.5](3.5,11)
		arc [
        	start angle=100,
        	end angle=40,
        	x radius=-2cm,
        	y radius=3cm
    	]--(-1.5,11)
		arc [
        	start angle=170,
        	end angle=100,
        	x radius=2cm,
        	y radius=3cm
    	];  

		\draw[dashed,->] (0.5,14)--(0.5,12);
		\draw (-0.5,14)--(1.5,15)--(1.5,18)--(-0.5,17)--(-0.5,14);

		\node at (-3,2) {$\mathbb{V}\cap\mathbb{A}$};
		\node at (7,2) {$\mathbb{V}\cap\mathbb{A}$};
		\node at (1.3,11) {$\mathbb{V}\cap\mathbb{A}$};

		\node at (-5,5) {$\widehat{V}_{1,j}$};
		\node at (10,5) {$\widehat{V}_{2,j}$};
		\node at (-1.5,14) {$\widecheck{V}_{j}$};

	\end{tikzpicture}
	\caption{Intermediate covers $\widehat{V}_{1,j},\widehat{V}_{2,j}$ and the universal cover $\widecheck{V}_j$}
	 \label{Figure:Intermediate covers}
\end{figure}

Denote by $\widecheck \p_{ \alpha, j}$ the lift to $\widecheck V_j$ of $\widehat\p_{\alpha; \alpha, j}$.   Since $\widehat\p_{\alpha;\alpha, j}$ converges to $\widehat \p_{\alpha,\infty}$ on $\widehat V_{\alpha,\infty}$,  we see that  $\widecheck \p_{\alpha, j}$ on $\widecheck V_j$ also converges to the lift $\widecheck\p_{\alpha,\infty}$ of $\widehat \p_{\alpha,\infty}$ on $\widecheck V_\infty$. Obviously both $\widecheck\p_{1,\infty}$ and $\widecheck\p_{2,\infty}$ belong to the Lie algebra of $\widecheck H$. It follows that $\widecheck\p_{2,\infty}=\lambda\widecheck\p_{1,\infty}$ for some $\lambda\neq 0$. A crucial observation is that we must have $\lambda=\pm1$. Indeed, this follows from \eqref{e:lengthcurvature},  noticing that $|\kappa(x)|$ depends on the local orbit through $x$.

We remark that in the above we have passed to subsequences at various stages, but the derived properties hold on all the possible limits. 
By the continuity of angle between $\partial_1,\partial_2$,  we may without loss of generality assume that $\lambda=1$ on all possible limits. On any limit we also denote $\widecheck\p\equiv \widecheck\p_{1,\infty}=\widecheck\p_{2,\infty}$.  Then $\widecheck H$ is generated by $\widecheck\p$ and a translation vector field $\widecheck \xi$ with $\|\widecheck \xi\|=1$. Furthermore, the quotient $\widecheck V_\infty/\widecheck H$ is the subset $\mathbb{V}\cap\mathbb{A}$ of $\mathbb H$, and $\|\widecheck \p\|=\rho$.


\

In the rest of the proof we focus on the set $V=F^{-1}(\mathbb V)$. This is a (trivial) $T^2$ bundle over $\mathbb V$.  The function $\rho$ on $\mathbb H^\circ$ can be naturally viewed as a function on $V$.  The Riemannian metric $g$ gives rise to a family of flat metrics on $G=T^2$ parametrized by $\mathbb V$. We introduce the \emph{Gram matrix} $\mathbb G$ with $\mathbb G_{\alpha\beta}\equiv g(\p_\alpha, \p_\beta)$ for $1\leq\alpha, \beta\leq 2$.  Let $\tau$ be the function on $V$ such that ${\xi}\equiv\p_1-(1-\tau)\p_2$ is pointwise orthogonal to $\partial_{2}$, and let $\sigma\equiv \|\xi\|_{g}$. Notice that $\tau$ and $\sigma$ are both $T^2$ invariant so can be viewed as functions on $\mathbb V$.

As $j\rightarrow\infty$, passing to subsequences we know that  under the convergence of $\widecheck V_j$ to $\widecheck{V}_\infty$, the Killing fields  $\widecheck\p_{1, j}$ and $\widecheck\p_{2, j}$  both converge naturally to the same vector field $\widecheck\p$ on $\widecheck V_\infty$. On $\widecheck V_j$, the function $\rho$ is uniformly comparable to $2^j$. 
Denote  $\xi_j\equiv \p_{1, j}-(1-\tau)\p_{2, j}$ in the rescaled annulus $\widetilde{A}_j$.
\begin{lemma}Passing to subsequences and pulling back to $\widecheck{V}_j$, $2^j\sigma^{-1}\xi_j$ converges in $C^{2}$ to $\widecheck{\xi}$  as $j\rightarrow\infty$.
\end{lemma}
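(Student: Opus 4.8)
The plan is to pull everything back to the universal cover $\widecheck V_j$, on which $\xi_j$ lifts to $\widecheck\xi_j=\widecheck\p_{1,j}-(1-\tau)\widecheck\p_{2,j}$. By the definition $\sigma=\|\xi\|_g$ together with $g_j=2^{-2j}g$ one has $\|\widecheck\xi_j\|_{g_j}\equiv 2^{-j}\sigma$, so $2^j\sigma^{-1}\widecheck\xi_j$ is a $g_j$-unit vector field; passing to a subsequence it converges in $C^0$ to a field $\eta$ on $\widecheck V_\infty$ with $\|\eta\|=1$, and the task is to identify $\eta=\widecheck\xi$ and to upgrade the convergence to $C^2$. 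Two exact features are available immediately. First, since $\xi$ is by construction $g$-orthogonal to $\p_2$, we have $g_j(\widecheck\xi_j,\widecheck\p_{2,j})=0$ for every $j$; letting $j\to\infty$ and using $\widecheck\p_{2,j}\to\widecheck\p$ gives $\eta\perp\widecheck\p$. Second, at each point $\widecheck\xi_j$ is a linear combination of the two Killing fields $\widecheck\p_{1,j}$ and $\widecheck\p_{2,j}$, hence lies pointwise in the plane $\Pi_j\equiv\mathrm{span}(\widecheck\p_{1,j},\widecheck\p_{2,j})$.

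First I would understand the limit of the planes $\Pi_j$. Writing $\widecheck\zeta_j\equiv\widecheck\p_{1,j}-\widecheck\p_{2,j}$, a Killing field in the Lie algebra $\mathfrak h_j$ tending to $0$, we have $\Pi_j=\mathrm{span}(\widecheck\p_{2,j},\widecheck\zeta_j)$. Normalizing $\widecheck\zeta_j$ in $\mathfrak h_j$ and passing to a subsequence, $\widecheck\zeta_j/\|\widecheck\zeta_j\|_{\mathfrak h_j}$ converges to a nonzero Killing field $\nu$ in the limit Lie algebra $\widecheck{\mathfrak h}=\mathrm{span}(\widecheck\p,\widecheck\xi)$. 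Consequently $\Pi_j\to\mathrm{span}(\widecheck\p,\nu)$ pointwise, and since $2^j\sigma^{-1}\widecheck\xi_j\in\Pi_j$ for all $j$, the limit obeys $\eta\in\mathrm{span}(\widecheck\p,\nu)$.

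Now I would combine this with $\eta\perp\widecheck\p$. Were $\nu$ proportional to $\widecheck\p$, then $\mathrm{span}(\widecheck\p,\nu)=\R\widecheck\p$ and $\eta$ would be parallel to $\widecheck\p$, contradicting $\eta\perp\widecheck\p$ and $\|\eta\|=1$. Hence $\nu$ is not proportional to $\widecheck\p$, and since $\nu\in\widecheck{\mathfrak h}$ this yields $\mathrm{span}(\widecheck\p,\nu)=\widecheck{\mathfrak h}=\mathrm{span}(\widecheck\p,\widecheck\xi)$. Thus $\eta$ is pointwise in $\mathrm{span}(\widecheck\p,\widecheck\xi)$, orthogonal to $\widecheck\p$ and of unit length; as $\widecheck\p$ (a rotation in $W$) and $\widecheck\xi$ (a translation in the orthogonal plane) are everywhere orthogonal, this forces $\eta=\pm\widecheck\xi$, with the sign fixed by the same continuity convention used to arrange $\lambda=1$ on all limits. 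Since the limit is thereby uniquely determined, the full sequence converges. For the $C^2$ statement, the non-proportionality of $\nu$ and $\widecheck\p$ means the angle between $\widecheck\zeta_j$ and $\widecheck\p_{2,j}$ is bounded away from $0$, so the orthogonal component $\widecheck\xi_j$ satisfies $\|\widecheck\xi_j\|\asymp\|\widecheck\zeta_j\|$; equivalently, the Pythagorean identity $\|\widecheck\zeta_j\|_{g_j}^2=(2^{-j}\sigma)^2+\tau^2\|\widecheck\p_{2,j}\|_{g_j}^2$ shows $2^j\sigma^{-1}\tau$ stays bounded. With this non-degeneracy, the normalized field and its derivatives are controlled uniformly by the $C^{k_0}$ convergence ($k_0=10$) of the $\widecheck\p_{\alpha,j}$ and the uniform regularity of $\tau$ and $\sigma$, giving $C^2$ subconvergence and hence $C^2$ convergence to $\widecheck\xi$.

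The step I expect to be the main obstacle is the danger inherent in normalizing a vector field whose norm tends to zero: a priori the degenerating direction $\widecheck\zeta_j$ could align with the rotation generator $\widecheck\p$, in which case the orthogonal width $\sigma$ would be of strictly smaller order than $\|\widecheck\zeta_j\|$ and both the identification of $\eta$ and the uniform $C^2$ estimate would break down. The decisive input ruling this out is the exact orthogonality $\widecheck\xi_j\perp\widecheck\p_{2,j}$ encoded in the definition of $\tau$, which forces $\eta\perp\widecheck\p$ and therefore $\nu\not\parallel\widecheck\p$; once this non-degeneracy is secured the remaining identification and regularity are routine.
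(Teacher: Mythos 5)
There is a genuine gap, and it sits precisely at the step you yourself flag as the main obstacle. Your argument hinges on the non-degeneracy claim that $\nu\not\parallel\widecheck\p$, equivalently that the angle between $\widecheck\zeta_j=\widecheck\p_{1,j}-\widecheck\p_{2,j}$ and $\widecheck\p_{2,j}$ is bounded below, equivalently (via your Pythagorean identity) that $2^j\sigma^{-1}\tau$ stays bounded. This claim is \emph{false} in the situation at hand: because the asymptotic cone $\mathbb H$ is $2$-dimensional, the $T^2$ orbits must collapse to points, which forces $\sigma/(\rho\tau)\to 0$ (this is exactly \eqref{eqn3}), i.e. $2^j\sigma^{-1}\tau\to\infty$. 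Hence $\|\xi_j\|_{g_j}=o(\|\zeta_j\|_{g_j})$, the angle between $\zeta_j$ and $\p_{2,j}$ tends to zero, and in any basis of $\mathfrak h_j$ converging to $(\widecheck\p,\widecheck\xi)$ the $\widecheck\xi$-coefficient of $\zeta_j$ (of size $\asymp 2^{-j}\sigma$) is dominated by the $\widecheck\p$-coefficient (of size $\asymp\tau$); so your $\nu$ \emph{is} proportional to $\widecheck\p$. The argument you give to exclude this is moreover circular: the inclusion $\eta\in\mathrm{span}(\widecheck\p,\nu)$ is obtained from convergence of the planes $\Pi_j$ to $\mathrm{span}(\widecheck\p,\nu)$, which is only legitimate when $\widecheck\p$ and $\nu$ are independent; when $\nu\parallel\widecheck\p$ these two fields span a line and do not determine the limit of the $2$-planes, so the relation $\eta\perp\widecheck\p$ (which holds unconditionally, being the limit of the exact orthogonality $\xi_j\perp\p_{2,j}$) produces no contradiction. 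Consequently both the identification $\eta=\pm\widecheck\xi$ and the $C^2$ estimate are unproved; a smaller additional issue is that extracting even a $C^0$ subsequential limit of $2^j\sigma^{-1}\xi_j$ requires equicontinuity, which is not automatic after dividing by the vanishing function $\sigma$.

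The lemma survives because the relevant cancellation is exact rather than asymptotic, and this is what the paper's proof exploits. One chooses a reference Killing field $\xi_j'$ in the span of $\p_{1,j},\p_{2,j}$ converging in $C^{k_0}$ to $\widecheck\xi$, writes it as $\xi_j'=\sigma_j^{-1}(\p_{1,j}-(1-\tau_j)\p_{2,j})$ with \emph{constants} $\sigma_j,\tau_j$, so that $\xi_j'=\sigma_j^{-1}\bigl(\xi_j+(\tau_j-\tau)\p_{2,j}\bigr)$; the two scalar conditions $g_j(\xi_j',\p_{2,j})\to 0$ and $g_j(\xi_j',\xi_j')\to 1$ then give $\sigma_j^{-1}|\tau_j-\tau|\to 0$ and $\sigma_j^{-1}\|\xi_j\|_{g_j}\to 1$, whence $\sigma_j^{-1}\xi_j=\xi_j'-\sigma_j^{-1}(\tau_j-\tau)\p_{2,j}\to\widecheck\xi$ with derivatives. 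The point is that the projection away from $\p_{2,j}$ annihilates the dominant $\p_{2,j}$-component of $\zeta_j$ \emph{exactly}, so no lower bound on the angle between $\zeta_j$ and $\p_{2,j}$ is needed --- and indeed none holds. To repair your argument you would have to replace the limit-of-planes step by this kind of exact bookkeeping in the $2$-dimensional Lie algebra.
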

\begin{proof}
 To see this  we first choose a vector field $\xi_j'$ (for $j\gg1$) in the linear span of $\p_{1,j}$ and $\p_{2, j}$ on each $V_j$ such that after passing to subsequences and pulling back to $\widecheck{V}_j$ it converges in $C^{k_0}$ to the limit $\widecheck\xi$. Write $\xi_j'$ uniquely as $$\xi_j'=\sigma_j^{-1}(\partial_{1,j}-(1-\tau_{j})\partial_{2,j}).$$ Then we have 
$$\xi_j'=\sigma_j^{-1}(\xi_j+(\tau_j-\tau)\partial_{2,j}).$$
As $\xi_j'$ converges in $C^{k_0}$ to $\widecheck{\xi}$, we have 
\begin{align}g_j(\xi_j',\partial_{2,j})\to0, &&g_{j}(\xi_j',\xi_j')\to 1.	
\end{align}
 It follows that
\begin{align*}
	\sigma_j^{-1}{|\tau_j-\tau|}\to0,
	&& \sigma_j^{-1}\|\xi_j\|_{g_j}\to1,
\end{align*}
and $\sigma_j^{-1}\xi_j$ converges to $ \widecheck\xi$ in $C^{k_0}$. 
This implies that $\|{\xi}_j\|_{g_j}^{-1}{\xi}_j=2^j\sigma^{-1}\xi_j$ also converges in $C^2$ to $\widecheck{\xi}$.
\end{proof}
By construction since $\widecheck \xi$ is a translation vector field we must have
\begin{equation}\label{eqn1}
\lim_{\rho\rightarrow\infty}\tau=0.
\end{equation}
Denote $f\equiv\|\p_2\|_g$.  Then from the convergence of $\widecheck \p_{2,j}$ to $\widecheck\p$ one can see that 
\begin{equation}\label{eqn2}\lim_{\rho\rightarrow\infty}\rho^{-1}f=1.
\end{equation}
Observe that $2^{-2j}f\sigma$ is the area of the $T^2$ orbit in $V_j$. By assumption  we have 
\begin{equation} \label{eqn6}
	\lim_{\rho\rightarrow\infty}\rho^{-2}f\sigma=\lim_{\rho\rightarrow\infty}\rho^{-1}\sigma=0.
\end{equation}
Next we derive a few key limiting properties. 

\begin{proposition}\label{prop:codimension 2 collapse}
	We have 
	\begin{equation}\label{eqn3}
		\lim_{\rho\rightarrow\infty}\frac{\sigma}{\rho\tau}=0.
	\end{equation}
\end{proposition}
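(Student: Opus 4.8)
The plan is to convert the statement into differential relations on the base for the $T^2$-invariant functions $f=\|\partial_2\|_g$, $\sigma$, $\tau$, and $f_1\equiv\|\partial_1\|_g=\sqrt{\sigma^2+(1-\tau)^2f^2}$, and then to conclude by elementary calculus. The inputs will be the geodesic-curvature identity \eqref{e:lengthcurvature} applied to the two rotation fields $\partial_1,\partial_2$ (valid on each flat limit over $\mathbb V$), together with the fact that $\sigma_j^{-1}\xi_j$ converges to the \emph{translation} field $\widecheck\xi$. Since $f,\sigma,\tau,\mathbb G_{12}$ are all basic (pulled back from $\mathbb V$), their gradients are horizontal; and since $\partial_1,\partial_2$ are Killing with $[\partial_1,\partial_2]=0$, I would use the elementary identities $\nabla_{\partial_\alpha}\partial_\alpha=-\tfrac12\nabla\|\partial_\alpha\|^2$ and $\nabla_{\partial_1}\partial_2=-\tfrac12\nabla\mathbb G_{12}$ to compute the geodesic curvatures of the relevant fiber orbits purely in terms of gradients of $f,\sigma,\tau$ on the base.

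The key computation is for the orbit of $\xi$. Along such an orbit $\tau$ is constant, so $\xi$ is a fixed combination of Killing fields there, and a direct expansion using the two identities above produces the clean cancellation
\[
\nabla_\xi\xi=-\tfrac12\nabla\|\xi\|^2=-\sigma\,\nabla\sigma,
\]
in which all $f$- and $\tau$-dependent terms drop out. As $\nabla\sigma$ is horizontal while $\xi$ is vertical, the orbit of $\xi$ has geodesic curvature exactly $\|\nabla\sigma\|/\sigma$. Because the rescaled fields $\sigma_j^{-1}\xi_j$ converge in $C^2$ to the translation $\widecheck\xi$, whose orbits are geodesics, this geodesic curvature must vanish in the limit; tracking the scaling by $2^{-j}$ gives $\|\nabla\sigma\|=o(\sigma/\rho)$, so $\sigma$ varies slowly. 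On the other hand, applying \eqref{e:lengthcurvature} to $\partial_2$ and $\partial_1$ converts, after the same bookkeeping, into $\|\nabla f\|\to 1$ and $\|\nabla f_1\|\to 1$, while \eqref{eqn2} gives $f\to\rho$ and $f_1\to\rho$. Differentiating $f_1^2=\sigma^2+(1-\tau)^2f^2$ then expresses $\nabla\tau$ through the small mismatch between the two nearly radial unit gradients $\nabla f_1$ and $\nabla f$, yielding a relation that couples $\tau$, $\sigma/\rho$, and their derivatives.

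Feeding these relations, together with \eqref{eqn1} and \eqref{eqn6}, into an elementary calculus argument (integrating the gradient bounds along the base and comparing the two vanishing quantities $\tau$ and $\sigma/\rho$) should yield \eqref{eqn3}. I expect the main obstacle to be exactly this last rate comparison: both $\tau$ and $\sigma/\rho$ tend to $0$, so their ratio must be extracted from the differential relations rather than from pointwise size, which is delicate and is presumably where the sign and precise asymptotics of $\tau$ get pinned down. A secondary technical point is rigor: \eqref{e:lengthcurvature} and the vanishing of the $\xi$-orbit curvature hold only on the flat limits, so the finite-$j$ error terms must be controlled uniformly, which is where the higher-order regularity from Lemma \ref{p:smoothing} and the $C^{k_0}$-convergence in Lemma \ref{prop:CFG} enter.
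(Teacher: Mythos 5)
There is a genuine gap, and it sits exactly where you flag your own unease: the ``last rate comparison'' cannot be extracted from the differential relations you set up, because those relations do not contain the information that forces \eqref{eqn3}. Your identities are all consequences of the \emph{local} geometry of the flat limits of the covers $\widecheck V_j$: the cancellation $\nabla_\xi\xi=-\sigma\nabla\sigma$ (which is correct, and in fact gives a clean alternative derivation of \eqref{eqn4}), the relations $\|\nabla f\|\to 1$, $\|\nabla f_1\|\to 1$ from \eqref{e:lengthcurvature}, and the consequence of $2^j\sigma^{-1}\xi_j$ converging to a translation. But all of these, together with \eqref{eqn1}, \eqref{eqn2} and \eqref{eqn6}, hold verbatim in a codimension-one collapsing model such as $\mathcal X_{0,\sigma_0}$ (constant $\sigma$, $\tau\equiv 0$), where the $T^2$ orbits collapse to circles, the asymptotic cone is $\R^3$ rather than $\mathbb H$, and \eqref{eqn3} is false. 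So no calculus argument built only on these inputs can prove \eqref{eqn3}; the statement is precisely the dividing line between codimension-one and codimension-two collapse, and you never feed in the hypothesis that distinguishes them.

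The missing ingredient is the two-dimensionality of the asymptotic cone, used pointwise rather than differentially: since $\widetilde A_j$ converges to the $2$-dimensional $\mathbb A\subset\mathbb H$, each $T^2$ orbit must have diameter $o(\rho)$, not just area $o(\rho^2)$. The paper's proof is then a one-line statement about flat tori. In the orthogonal basis $\p_{2,j}$, $2^j\sigma^{-1}\xi_j$ the orbit is $\R^2/\Lambda$ with $\Lambda$ generated (after normalizing the $\p_2$-direction) by $(1,0)$ and $(1-\tau,\,\sigma/\rho)$, and such a torus has diameter tending to zero if and only if the ratio $(\sigma/\rho)/\tau$ tends to zero; the ``only if'' is seen by computing the distance from $[(0,0)]$ to $[(x_0,0)]$ as the distance from the origin to the line $bx+ay=bx_0$ with $a=\tau$, $b=\sigma/\rho$. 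No geodesic curvature, regularity from Lemma \ref{p:smoothing}, or comparison of $\nabla f$ with $\nabla f_1$ is needed. If you want to salvage your framework, the fix is not more differential identities but to insert the orbit-diameter bound $\mathrm{diam}(F^{-1}(x))=o(\rho)$ as an explicit hypothesis and reduce to this lattice computation.
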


\begin{proof}
This follows from the fact that the collapsing is of codimension 2; in other words, the diameters of the $T^2$ orbits are of order $o(\rho)$. Scaling down the metric it suffices to show that a sequence of flat tori given by the quotient of standard $\R^2$ by the lattice $\Lambda$ generated by two vectors $(1, 0)$ and $(1-a, b)$ with $a, b\rightarrow 0$ has diameter goes to zero if and only if $b/a\rightarrow 0$. To see this fact, we simply notice that for $x_0\in [0, 1]$, the distance between $[(0, 0)]$ and $[(x_0, 0)]$ in $\R^2/\Lambda$ is given by the distance between $(0, 0)$ and the line $bx+ay=bx_0$ in $\R^2$. The latter goes to zero if and only if $b/a\rightarrow 0$. Write the $T^2$ orbits in $V_j$ as quotient of $\mathbb{R}^2$ by $\Lambda$ in terms of the basis ${\partial}_{2,j}$ and $2^j\sigma^{-1}{\xi}_j$ and apply the above. 
\end{proof}



\begin{proposition}
We have
    \begin{equation}\label{eqn4}
	    \lim_{\rho\rightarrow\infty}\frac{\rho\sigma_\rho}{\sigma}=0,
    \end{equation} 
	\begin{equation}\label{eqn5}
		\lim_{\rho\rightarrow\infty}\frac{\rho^2\tau_\rho}{\sigma}=0.
	\end{equation}
	\end{proposition}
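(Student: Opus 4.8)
The plan is to deduce both limits from the $C^2$-convergence, established in the preceding lemma, of the $g_j$-unit field
\[ e_j:=\|\xi_j\|_{g_j}^{-1}\xi_j=2^j\sigma^{-1}\xi_j \]
to the translation field $\widecheck\xi$ on $\widecheck V_\infty$. The decisive structural fact is that $\widecheck\xi$ is \emph{parallel}, $\nabla\widecheck\xi=0$; consequently, writing $\nu_j$ for the horizontal lift to $\widecheck V_j$ of the rescaled radial field $\partial_{\bar\rho}$ (with $\bar\rho:=2^{-j}\rho$, of unit size on $\mathbb A$), both
\[ \nabla_{\nu_j}e_j\longrightarrow\nabla_{\nu_\infty}\widecheck\xi=0,\qquad \nabla_{e_j}e_j\longrightarrow\nabla_{\widecheck\xi}\widecheck\xi=0 \]
as $j\to\infty$, where $\nu_\infty$ is the unit radial field of the flat limit. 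Setting $s_j:=\|\xi_j\|_{g_j}=2^{-j}\sigma$ and $f_j:=\|\widecheck\p_{2,j}\|_{g_j}$, and using that $\sigma,\tau$ are $T^2$-invariant (so $\nu_j(h)=2^j\partial_\rho h$ for such $h$), one has the exact identities
\[ \frac{\rho\,\sigma_\rho}{\sigma}=\bar\rho\,\frac{\nu_j(s_j)}{s_j},\qquad \frac{\rho^2\tau_\rho}{\sigma}=\bar\rho^2\,\frac{\nu_j(\tau)}{s_j}. \]
Hence \eqref{eqn4} and \eqref{eqn5} reduce, respectively, to $\nu_j(s_j)/s_j\to0$ and $\nu_j(\tau)/s_j\to0$.

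The two algebraic inputs are that $\widecheck\p_{1,j},\widecheck\p_{2,j}$ are commuting Killing fields for $g_j$ — so each $\nabla\widecheck\p_{\alpha,j}$ is skew and, from $[\widecheck\p_{1,j},\widecheck\p_{2,j}]=0$, the quantities $S_{\alpha\beta}:=g_j(\nabla_{\nu_j}\widecheck\p_{\alpha,j},\widecheck\p_{\beta,j})$ satisfy $S_{\alpha\beta}=S_{\beta\alpha}=\tfrac12\nu_j(\mathbb G^{(j)}_{\alpha\beta})$ — together with $g_j(\xi_j,\widecheck\p_{2,j})=0$, which is the definition of $\tau$. For \eqref{eqn5} I would pair $\nabla_{\nu_j}e_j$ with $\widecheck\p_{2,j}$: since $g_j(\xi_j,\widecheck\p_{2,j})=0$, the product rule gives $g_j(\nabla_{\nu_j}e_j,\widecheck\p_{2,j})=s_j^{-1}g_j(\nabla_{\nu_j}\xi_j,\widecheck\p_{2,j})$, and in the expansion $\nabla_{\nu_j}\xi_j=\nabla_{\nu_j}\widecheck\p_{1,j}-(1-\tau)\nabla_{\nu_j}\widecheck\p_{2,j}+\nu_j(\tau)\widecheck\p_{2,j}$ the Killing part contributes $S_{12}-(1-\tau)S_{22}=-\tfrac12\nu_j(\tau)f_j^2$ (using $\mathbb G^{(j)}_{12}=(1-\tau)f_j^2$), while the last term contributes $\nu_j(\tau)f_j^2$, so that
\[ g_j(\nabla_{\nu_j}e_j,\widecheck\p_{2,j})=\tfrac12\,s_j^{-1}\nu_j(\tau)\,f_j^2. \]
The left side converges to $g_\infty(\nabla_{\nu_\infty}\widecheck\xi,\widecheck\p)=0$, and $f_j^2\to\bar\rho^2$ is bounded and bounded away from $0$ on $\mathbb A$; therefore $\nu_j(\tau)/s_j\to0$, which is \eqref{eqn5}.

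For \eqref{eqn4} the naive attempt — pairing $\nabla_{\nu_j}e_j$ with $e_j$ — is vacuous, since $g_j(\nabla_{\nu_j}e_j,e_j)\equiv0$ for the unit field $e_j$, and in fact a direct first-order computation shows that the radial logarithmic derivative $\nu_j(s_j)/s_j$ cannot be read off from $\nabla_{\nu_j}e_j$ at all. This is the main obstacle. The resolution is to use the other consequence of $\nabla\widecheck\xi=0$, namely that the integral curves of $\widecheck\xi$ are geodesics, and to pair $\nabla_{e_j}e_j$ with $\nu_j$. Because $s_j$ and $\tau$ are $T^2$-invariant, $\nabla_{e_j}e_j=s_j^{-2}\nabla_{\xi_j}\xi_j$ with $\nabla_{\xi_j}\xi_j=\nabla_{\xi_j}\widecheck\p_{1,j}-(1-\tau)\nabla_{\xi_j}\widecheck\p_{2,j}$; applying the skew-symmetry of each $\nabla\widecheck\p_{\alpha,j}$ and then $g_j(\xi_j,\widecheck\p_{2,j})=0$ yields $g_j(\nabla_{\xi_j}\xi_j,\nu_j)=-g_j(\nabla_{\nu_j}\xi_j,\xi_j)=-\tfrac12\nu_j(\|\xi_j\|_{g_j}^2)=-s_j\nu_j(s_j)$, whence
\[ g_j(\nabla_{e_j}e_j,\nu_j)=-\,s_j^{-1}\nu_j(s_j). \]
Since $\nabla_{e_j}e_j\to\nabla_{\widecheck\xi}\widecheck\xi=0$, the left side tends to $0$, giving $\nu_j(s_j)/s_j\to0$ and hence \eqref{eqn4}. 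The only points requiring care are the passage to the limit in each covariant-derivative pairing, guaranteed by the equivariant $C^2$ convergence (with $k_0=10$), and the identification of $\nu_j$-derivatives with $\partial_\rho$-derivatives, which is exact for quantities lifted from the asymptotically flat base.
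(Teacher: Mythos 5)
Your argument is correct, and it reaches the two limits by a route that differs in execution from the paper's, though it rests on the same limit structure. The paper forms the single Lie bracket $[2^{j}\partial_\rho^{\sharp},\,2^{j}\sigma^{-1}\xi_j]$, shows (using that $\partial_1,\partial_2$ are commuting Killing fields with $\langle\partial_\rho^{\sharp},\partial_\alpha\rangle=0$, whence $\langle[\partial_\alpha,\partial_\rho^{\sharp}],\partial_\beta\rangle=0$) that this bracket equals $2^{2j}\bigl(-\sigma_\rho\sigma^{-2}\xi_j+\sigma^{-1}\tau_\rho\,\partial_{2}\bigr)$, and then reads off \eqref{eqn4} and \eqref{eqn5} \emph{simultaneously} from the single fact that $\|[\widecheck\partial_\rho,\widecheck\xi]\|=0$ in the limit: since $\xi\perp\partial_2$, the two components must vanish separately, giving $\sigma_\rho\sigma^{-1}=o(2^{-j})$ and $\tau_\rho\sigma^{-1}=o(2^{-2j})$ at once. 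You instead exploit the parallelism of the limit translation field $\widecheck\xi$ and extract the two quantities from two separate covariant-derivative pairings, $\langle\nabla_{\nu_j}e_j,\partial_{2,j}\rangle$ for \eqref{eqn5} and $\langle\nabla_{e_j}e_j,\nu_j\rangle$ for \eqref{eqn4}; your Killing/commutation bookkeeping via the symmetric matrix $S_{\alpha\beta}=\tfrac12\nu_j(\mathbb G_{\alpha\beta})$ is the covariant-derivative counterpart of the paper's orthogonality computation, and both of your key identities, $g_j(\nabla_{\nu_j}e_j,\partial_{2,j})=\tfrac12 s_j^{-1}\nu_j(\tau)f_j^{2}$ and $g_j(\nabla_{e_j}e_j,\nu_j)=-s_j^{-1}\nu_j(s_j)$, check out. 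Your observation that the unit-field pairing $\langle\nabla_{\nu_j}e_j,e_j\rangle$ is vacuous, which forces you to pass to the geodesic-curvature pairing for \eqref{eqn4}, is exactly the point where the Lie-bracket formulation is slightly more economical: the bracket retains the nonzero $\xi$-component $-\sigma_\rho\sigma^{-2}\xi$ that differentiating the \emph{normalized} field necessarily discards. What your version buys is a cleaner conceptual statement (the limit field is parallel and its flow lines are limit geodesics) at the cost of two computations instead of one; in substance the two proofs carry the same information and both rely on the same $C^{2}$ equivariant convergence established in the preceding lemma.
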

	\begin{proof}
	Let $\p_\rho^{\sharp}$ be the vector field on $V$ given by the horizontal lift of the standard vector field $\p_\rho$ on $\mathbb{H}^\circ$. Now consider the convergence of $\widecheck V_j$ to $\widecheck V_\infty$. We see that the rescaled vector field $2^j\p_\rho^\sharp$ converges in $C^2$ to the radial vector field $\widecheck\p_\rho$  in the rotation plane of $\widecheck\p$ in $\widecheck V_\infty$. So the norm of the commutator $\|[2^{j}\p_\rho^\sharp, 2^j\sigma^{-1}\xi_j]\|_{g_j}$ converges to $\|[\widecheck\p_\rho, \widecheck\xi]\|=0$ as $j\rightarrow\infty$.
	Notice that since $\p_1$ and $\p_2$ are Killing fields and $[\p_1, \p_2]=\langle \p_\rho^\sharp,\p_1\rangle=\langle\p_\rho^\sharp, \p_2\rangle=0$, we have for $\alpha=1, 2$,
	$$\langle [ \p_\alpha,\p_\rho^\sharp], \p_\beta\rangle=\langle\mathcal L_{\p_\alpha}\p_\rho^\sharp, \p_\beta\rangle=\mathcal L_{\p_\alpha}\langle \p_\rho^\sharp, \p_\beta\rangle-(\mathcal L_{\p_\alpha}g)(\p_\rho^\sharp, \p_\beta)-\langle \p_\rho^\sharp, \mathcal L_{\p_\alpha}\p_\beta\rangle=0. $$
	Since $\xi$ is orthogonal to $\p_2$,
	$$\|[2^{j}\p_\rho^\sharp, 2^j\sigma^{-1}\xi]\|_{g_j}^2=2^{4j}\|(-\sigma_\rho\sigma^{-2}\xi+\sigma^{-1}\tau_\rho \p_2)\|_{g_j}^2= 2^{4j}((\frac{\sigma_\rho}{\sigma})^22^{-2j}+(\frac{\tau_\rho}{\sigma})^2 2^{-2j} f^2).$$
	Thus $\tau_\rho\sigma^{-1}=o(2^{-2j})$ and $\sigma_\rho\sigma^{-1}=o(2^{-j})$ on $V_j$. The conclusion then follows.
	\end{proof}

	Now using \eqref{eqn1}, \eqref{eqn6}, \eqref{eqn3}, \eqref{eqn4}, \eqref{eqn5} and the L'Hospital's rule we get 
\begin{align*}
	\infty=\lim_{\rho\to\infty}\frac{\tau}{\sigma/\rho}=\lim_{\rho\to\infty}\frac{\tau_\rho}{\sigma_\rho/\rho-\sigma/\rho^2}=\lim_{\rho\to\infty}\frac{\tau_\rho}{-\sigma/\rho^2}=0.
\end{align*}
This is a contradiction hence completes the proof of Theorem \ref{thm}.

	\begin{remark}
\eqref{eqn4} has a geometric explanation if we  consider the convergence to the asymptotic cone $\mathbb H$ as metric measure spaces.  On the one hand, we know from the local description of the universal covering geometry that the renormalized limit measure on $\mathbb H$ has  a density of the form $d\nu= \rho d\rho dz$. On the other hand, by Fukaya's theorem \cite{Fukaya} we know that up to a multiplicative constant $f\sigma$ converges to $\rho$.  Then \eqref{eqn4} can also be obtained by comparing these two facts. 
\end{remark}

\section{Further discussion}
	In this section we construct an explicit sequence of Riemannian metrics $g_i$ on $\mathfrak A=S^3\times [2^{-1}, 2]$ with $\sup_{\mathfrak A}|Rm_{g_i}|\rightarrow 0$ which collapse to the annulus  $\mathbb A\equiv\{2^{-1}\leq r\leq 2\}$ in $\mathbb H$. This shows that Theorem \ref{thm} is of a global nature. We will also explain the relevance to the asymptotic curvature gap conjecture of Petrunin-Tuschmann \cite{PT}.

\subsection{A model flat metric}\label{subsec:model flat metric}
Denote the flat product metric $\C^2=\C\times \C$. Given $\alpha, \sigma\in [0,1)$, we consider the Euclidean motion $\gamma: (z_1, z_2)\mapsto (z_1+\sigma, e^{2\pi \sqrt{-1}\alpha}z_2)$. Let $\mathcal X_{\alpha, \sigma}$ be the quotient of $\C^2$ by the $\mathbb Z$-action generated by $\gamma$. Then $\mathcal X_{\alpha, \sigma}$ is a complete flat manifold with cubic volume growth, but the asymptotic geometry depends on the rationality of $\alpha$.  When $\alpha$ is rational  we write $\alpha=p/q$ for $p, q$ coprime, then $\mathcal X_{\alpha, \sigma}$ has asymptotic cone  given by  the product $\C_\alpha\times \R$, where $\C_\alpha$ is a 2 dimensional flat cone with angle $2\pi/q$.  
When $\alpha$ is irrational the end geometry is quite different. This is similar to the example of 3 dimensional flat manifolds studied by Gromov (see \cite{Minerbe, Chen-Li}). One can check that $\mathcal X_{\alpha, \sigma}$ is still conical but the asymptotic cone is instead the half plane $\mathbb H$.

Notice that $\mathcal X_{\alpha, \sigma}$ admits a $T^2$ symmetry, with the first $S^1$ action inherited from the rotation on the second $\C$ factor, and the second $S^1$ action given by $e^{2\pi\sqrt{-1}t}\cdot (z_1, z_2)=(z_1+t\sigma, e^{2\pi\sqrt{-1}\alpha t}z_2)$.  So we may rewrite the flat metric on $\mathcal X_\alpha$ in terms of the Gram matrix $\mathbb G$ as $$g=d\rho^2+dz^2+\sum_{\alpha, \beta} \mathbb G_{\alpha\beta}d\phi_\alpha\otimes d\phi_\beta,$$ where $(\rho, z)$ denote the standard coordinates on $\mathbb H$.  One can compute that 
\begin{equation}
	\mathbb G=\begin{pmatrix}
	\rho^2 & \alpha \rho^2\\
	\alpha\rho^2 & \alpha^2\rho^2+\sigma^2
\end{pmatrix}.
\end{equation}
Notice $\det(\rho^{-1}\sigma^{-1}\mathbb G)=1$. It is a general fact that 4 dimensional toric Ricci-flat metrics give rise to integrable systems (see for example \cite{Kunduri}). One can check that in our setting the normalized Gram matrix $\sigma^{-1}\rho^{-1}\mathbb G$ defines an axi-symmetric harmonic map from $\mathbb R^3$ (obtained by rotating $\mathbb H$ around the $z$-axis) into the symmetric space $SL(2; \mathbb R)/SO(2)$; the latter can be identified with the hyperbolic plane $\mathcal H=\{(X, Y)|X>0\}$ via
$$\left(\begin{matrix}
X+X^{-1}Y^2 & X^{-1}Y\\ X^{-1}Y & X^{-1}
\end{matrix}\right).$$

Following usual terminologies in the literature we call $\mathcal X_{\alpha, \sigma}$  an \emph{AF} model end. An AF manifold is by definition a complete Riemannian manifold that is asymptotic to an AF model end at a polynomial rate.  For example, the Schwarzschild gravitational instanton and the Kerr gravitational instantons are AF. More delicate examples of AF gravitational instantons were constructed explicitly by Chen-Teo \cite{Chen-Teo} using the technique of inverse scattering transform. 

 By definition an AF manifold is conical $\mathcal{AF}$.  Notice that an AF end  is not simply-connected, so we do not arrive at a contradiction with Theorem \ref{thm}. 
  The manifolds $\mathcal X_{\alpha, \sigma}$ for $\alpha$ irrational serve as local models for the construction in the next subsection.

\subsection{Simply-connected domains collapsing to \texorpdfstring{$\mathbb{A}$}{}}
\label{subsec:collapse to annulus}
We will adopt the notation in Section \ref{sec:proof} and consider only $T^2$ invariant metrics. The question reduces to constructing suitable family of flat metrics on $T^2$ with prescribed behavior. We introduce polar coordinates $(r, \theta)$ on $\mathbb H$ so that $\rho=r\cos\theta$, $z=r\sin\theta$ with $\theta\in[0, \pi]$.  

Around the boundary when $\theta$ is close to $0$ and $\pi$ we can use  the annuli in the model flat end $\mathcal X_{\alpha, \sigma}$. In order to create collapsing metrics on $\mathfrak A$, we need to glue together annuli in two different such ends. 

We introduce the model normalized Gram matrices 
\begin{equation}
	\mathbb{G}^1_{\sigma,\tau}\equiv\left(\begin{matrix}
	\frac{1}{\sigma}r\sin\theta &\frac{1-\tau}{\sigma}r\sin\theta\\
    \frac{1-\tau}{\sigma}r\sin\theta& \frac{(1-\tau)^2r^2\sin^2\theta+\sigma^2}{\sigma r\sin\theta}
	\end{matrix}\right),
\end{equation}
\begin{equation}
	\mathbb{G}^2_{\sigma,\tau}\equiv\left(\begin{matrix}
		\frac{(1-\tau)^2 r^2\sin^2\theta+\sigma^2}{\sigma r\sin\theta} &\frac{1-\tau}{\sigma} r\sin\theta\\
		\frac{1-\tau}{\sigma} r\sin\theta& \frac{1}{\sigma} r\sin\theta
	\end{matrix}\right).
\end{equation}
Notice that the metric $d r^2+ r^2d\theta^2+\sigma r\sin\theta \mathbb{G}^\alpha_{\sigma,\tau}$ are both isometric to $\mathcal X_{1-\tau, \sigma}$. The difference between the two is that for each $\alpha$, the rotational Killing field for $\mathbb G^\alpha_{\sigma, \tau}$ is given by $\p_{\phi_\alpha}$. 
The idea is to patch these two models together to construct a global normalized Gram matrix $\mathbb{G}$, so that $\mathbb{G}=\mathbb{G}^1_{\sigma,\tau}$ over $\theta\in[0,\frac{\pi}{3}]$ and $\mathbb{G}=\mathbb{G}^2_{\sigma,\tau'}$ over $\theta\in[\frac{2\pi}{3},\pi]$. Then by construction the topology will be simply-connected. 

The key point is  on the choice of the appropriate parameters  $\sigma,\tau,\tau'$.
Viewing  $\mathbb{G}^\alpha_{\sigma, \tau}$ as maps into $\mathcal H$, for $ r\in[2^{-1},2]$ we have 
\begin{align}
	\mathbb{G}^1_{\sigma,\tau}( r)&:[0,\pi]\mapsto (X,Y)=\left(\frac{\sigma r\sin\theta}{(1-\tau)^2 r^2\sin^2\theta+\sigma^2},\frac{(1-\tau) r^2\sin^2\theta}{(1-\tau)^2 r^2\sin^2\theta+\sigma^2}\right),\\
	\mathbb{G}^2_{\sigma,\tau}( r)&:[0,\pi]\mapsto (X,Y)=\left(\frac{\sigma}{ r\sin\theta},1-\tau\right).
\end{align}
The images are contained in geodesics in $\mathcal H$. Indeed, the image of $\mathbb{G}^1_{\sigma,\tau}( r)$ is part of the half circle from $(0,0)$ to $(0,\frac{1}{1-\tau})$ and the image of $\mathbb{G}^2_{\sigma,\tau}( r)$ is part of the straight line $Y=1-\tau$ (see Figure \ref{Figure:hyperbolic space}).

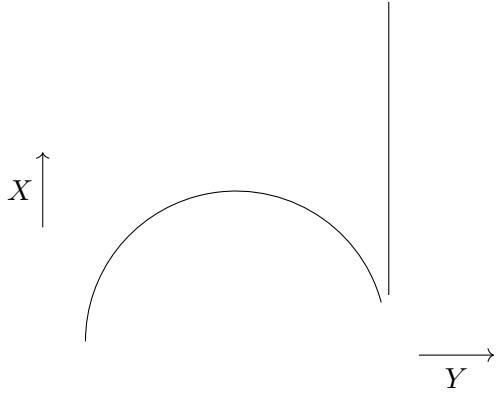
\begin{figure}
\begin{tikzpicture}
	\draw (-2,0) (2,0) arc(15:180:2);
	\draw (2.1,0.1)--(2.1,4);

	\draw[->] (2.5,-0.7)--(3.5,-0.7);
	\node at (3,-1) {$Y$};

	\draw[->] (-2.5,1)--(-2.5,2);
	\node at (-2.8,1.5) {$X$};
\end{tikzpicture}
\caption{Images of $\mathbb{G}_{\sigma,\tau}^\alpha$}
\label{Figure:hyperbolic space}
\end{figure}

Now for $\tau\ll1$ we set $1-\tau'=\frac{1}{1-\tau}$ in order to make the half circle and the line tangential. Then we choose $\sigma\ll\tau\rightarrow 0$ so that each $T^2$ collapses to a point. We  patch $\mathbb{G}^1_{\sigma,\tau}( r)$ and $\mathbb{G}^2_{\sigma,\tau'}( r)$ by interpolating them over $\theta\in[\frac{\pi}{3},\frac{2\pi}{3}]$. 
Consider the isometry of $\mathcal H$ given by 
$$\Phi:(X,Y)\mapsto \left(\frac{1}{\sigma}X,\frac{1}{\sigma}(Y-\frac{1}{1-\tau})\right).$$ 
 Given any normalized Gram matrix $\mathbb{G}$ associated to $(X, Y)\in \mathcal H$,  we denote by $\Phi \mathbb G$ the normalized Gram matrix corresponding to $\Phi(X, Y)\in \mathcal H$. 
This is nothing but rewriting $\mathbb{G}$ under the basis $\frac{1}{\sqrt\sigma}(\partial_{1}-\frac{1}{1-\tau}\partial_{2}),\sqrt\sigma\partial_{2}$. We have \begin{align*}
	\Phi\mathbb G_{\sigma,\tau}^1( r)&=\left(\frac{ r\sin\theta}{(1-\tau)^2 r^2\sin^2\theta+\sigma^2},-\frac{\sigma}{(1-\tau)^2 r^2\sin^2\theta+\sigma^2}\right),\\
	\Phi \mathbb G_{\sigma,\tau'}^2( r)&=\left(\frac{1}{ r\sin\theta},0\right).
\end{align*}

Now by a straightforward interpolation one can find a smooth family of curves $\mathbb G'_{\sigma, \tau}: \{\frac{1}{2}\leq r \leq 2]\}\subset \mathbb H\rightarrow \mathcal H$ such that $\mathbb G_{\sigma, \tau}'=\Phi\mathbb G^1_{\sigma, \tau}$ for $\theta\in [0, \frac{\pi}{3}]$, $\mathbb G'_{\sigma, \tau}=\Phi \mathbb G^2_{\sigma, \tau'}$ for $\theta\in [\frac{2\pi}{3}, \pi]$, and for $\theta\in [\frac{\pi}{3}, \frac{2\pi}{3}]$ we have $\mathbb G'_{\sigma, \tau}$ converge smoothly to $(\frac{1}{r\sin\theta}, 0)$ as $\sigma\ll\tau\rightarrow0$. 
%
%
%
Then we define $\mathbb G_{\sigma, \tau}\equiv \Phi^{-1}\mathbb G'_{\sigma, \tau}$. It induces the metric on $S^3\times[2^{-1},2]$ via
\begin{equation}
	g_{\sigma,\tau}=d r^2+ r^2d\theta^2+\sigma r\sin\theta \mathbb{G}_{\sigma,\tau}.
\end{equation}

We claim that as $\sigma\ll\tau\to0$, $g_{\sigma,\tau}$  is collapsing to $\mathbb A$ with $\sup_{\mathfrak A}|Rm_{g_{\sigma, \tau}}|\rightarrow 0$. For this we study the limit geometry of local universal covers.  It is clear that for $\theta\in[0,\frac{\pi}{3}]$ and $[\frac{2\pi}{3},\pi]$ the limit geometry is given by the model flat ends. We only need to consider the region when $\theta\in[\frac{\pi}{3},\frac{2\pi}{3}]$. 

Under the basis $\{\frac{1}{\sqrt\sigma}(\partial_1-\frac{1}{1-\tau}\partial_2),\sqrt{\sigma}\partial_2\}$ we have
\begin{equation}\label{eq:gram matrix moved into hyperbolic space}
	\sigma r\sin\theta\Phi \mathbb G_{\sigma,\tau}=\sigma r\sin\theta\left(\begin{matrix}
		X'+X'^{-1}Y'^2 & X'^{-1}Y'\\ X'^{-1}Y' & X'^{-1}
	\end{matrix}\right),
\end{equation}
where $(X',Y')$ converge smoothly to $(\frac{1}{ r\sin\theta},0)$. 
This precisely means that under $\{\frac{1}{\sigma}(\partial_{1}-\frac{1}{1-\tau}\partial_2),\partial_{2}\}$, as $\sigma\ll\tau\to0$, the metrics $g_{\sigma,\tau}$ when pulled back to the universal covers converge smoothly to the standard flat metric in $\mathbb R^3\times \R$:  
$$d r^2+ r^2d\theta^2+\left(\begin{matrix} 1&0\\0& r^2\sin^2\theta
\end{matrix}\right).$$
The  condition that  $\sigma\ll\tau$ ensures that each $T^2$ collapses to a point as in Proposition \ref{prop:codimension 2 collapse}, so the limit is $\mathbb{A}$.

\subsection{No gap for asymptotic curvature}

It is tempting to apply the construction in the previous subsection to form a counterexample to Conjecture \ref{conj:PT}. However, as Theorem \ref{thm} shows we know this is impossible -- the issue arises when we take the limit as $\rho\rightarrow\infty$ since the asymptotic properties deduced in Section 2 can not be simultaneously achieved (due to the L'Hospital rule!). Nevertheless, if we allow some flexibility then one can indeed construct examples satisfying most of the other asymptotic properties. We may relax the asymptotical flatness property as follows:

\begin{proposition}
	For any $\epsilon>0$ there is a complete Riemannian metrics $g_\epsilon$ on the standard $\mathbb R^4$ with quadratic curvature decay and $A(M, g_\epsilon)\leq \epsilon$ which is asymptotic to the cone $\mathbb H$. 
\end{proposition}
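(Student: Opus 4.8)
The plan is to globalize the construction of Section \ref{subsec:collapse to annulus}, using a single fixed pair of parameters and exploiting a scaling covariance to control the curvature at all radii, and then to cap off near the origin. Fix a small $\tau_\infty>0$ for which $\alpha\equiv 1-\tau_\infty$ is irrational, let $\tau_\infty'$ be determined by $1-\tau_\infty'=(1-\tau_\infty)^{-1}$, and fix $\sigma_0>0$. Applying the same interpolation recipe as in Section \ref{subsec:collapse to annulus} but now for \emph{all} $r\geq r_0$ (with the $\theta$-cutoffs taken independent of $r$), one obtains a smooth $T^2$-invariant metric
\[
g=dr^2+r^2\,d\theta^2+\sigma_0\, r\sin\theta\,\mathbb G_{\sigma_0,\tau_\infty}
\]
on $S^3\times[r_0,\infty)$. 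Over $\theta\in[\tfrac{\pi}{3},0]^c$, i.e.\ $\theta\in[0,\tfrac{\pi}{3}]$ and $\theta\in[\tfrac{2\pi}{3},\pi]$, the metric $g$ is flat, being isometric to pieces of the model ends $\mathcal X_{\alpha,\sigma_0}$ and $\mathcal X_{1-\tau_\infty',\sigma_0}$ of Section \ref{subsec:model flat metric}, with the two circle factors degenerating at $\theta=0$ and $\theta=\pi$ respectively; this makes the cross-section an $S^3$ and the end simply-connected. I then fill in a $T^2$-invariant disk at $r=r_0$, matching the standard degeneration of the $T^2$-action on $\mathbb C^2$, so that $g$ extends to a complete smooth metric on the standard $\mathbb R^4$.

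The curvature estimate is the heart of the matter and follows from a scaling covariance. A direct inspection of the entries of $\mathbb G^1_{\sigma,\tau}$ and $\mathbb G^2_{\sigma,\tau'}$ gives the pointwise identity $\mathbb G_{\sigma_0,\tau_\infty}(cr,\theta)=\mathbb G_{\sigma_0/c,\tau_\infty}(r,\theta)$ of normalized Gram matrices, and this persists through the geodesic interpolation in $\mathcal H$ because the two interpolation endpoints agree under the same substitution. Consequently the radial dilation $\Psi_c:(r,\theta,\phi)\mapsto(cr,\theta,\phi)$ satisfies $\Psi_c^*\,g_{\sigma_0,\tau_\infty}=c^2\,g_{\sigma_0/c,\tau_\infty}$, so that $g_{\sigma_0,\tau_\infty}$ at radius $r$ is a homothety of $g_{\sigma_0/r,\tau_\infty}$ at radius $1$, whence
\[
r^2\,|Rm_{g}|(r)=|Rm_{g_{\sigma_0/r,\tau_\infty}}|(1).
\]
As $r\to\infty$ the effective parameter $\sigma_0/r$ tends to $0$ with $\tau_\infty$ fixed, and all curvature is concentrated in the interpolation region $\theta\in[\tfrac{\pi}{3},\tfrac{2\pi}{3}]$ (the rest being flat). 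By the curvature analysis behind the claim $\sup_{\mathfrak A}|Rm_{g_{\sigma,\tau}}|\to0$ as $\sigma\ll\tau\to0$ in Section \ref{subsec:collapse to annulus}, the function $\sigma\mapsto\sup_{\mathfrak A}|Rm_{g_{\sigma,\tau_\infty}}|$ extends continuously to $\sigma=0$, with a limiting value $\delta(\tau_\infty)$ satisfying $\delta(\tau_\infty)\to0$ as $\tau_\infty\to0$. Hence $g$ has quadratic curvature decay and $A(\mathbb R^4,g)=\delta(\tau_\infty)$.

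It remains to identify the asymptotic cone and to arrange $\delta(\tau_\infty)\leq\epsilon$. The base metric $dr^2+r^2\,d\theta^2$ is the flat $\mathbb H$, and the $T^2$ fibers collapse to points: in the parametrization of Proposition \ref{prop:codimension 2 collapse} the relevant ratio is comparable to $\sigma_0/(r\tau_\infty)\to0$, so the collapse is of codimension $2$ and the rescaled spaces converge to $\mathbb H$. Finally, as $\tau_\infty\to0$ the two endpoints $\Phi\mathbb G^1_{\sigma,\tau_\infty}$ and $\Phi\mathbb G^2_{\sigma,\tau_\infty'}$ both tend to the common point $(\tfrac{1}{r\sin\theta},0)\in\mathcal H$, so the interpolation trivializes and $\delta(\tau_\infty)\to0$; choosing $\tau_\infty$ small gives $A(\mathbb R^4,g)=\delta(\tau_\infty)\leq\epsilon$. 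I expect the main obstacle to be precisely this curvature input: one must show that the collapse of the interpolation region proceeds with uniformly bounded curvature as $\sigma_0/r\to0$, so that $r^2|Rm|$ admits the finite limit $\delta(\tau_\infty)$, together with the continuity $\delta(\tau_\infty)\to0$ — this is the quantitative strengthening of the curvature claim of Section \ref{subsec:collapse to annulus} needed to include the collapsed limit $\sigma=0$. The capping and the matching of the two degeneration circles are routine, but must be done so as to keep $g$ smooth with underlying manifold the standard $\mathbb R^4$, consistent with the topological discussion following Conjecture \ref{conj:PT}; note that such a metric is consistent with Theorem \ref{thm} precisely because one cannot let $\tau_\infty\to0$ while retaining asymptotic cone $\mathbb H$, so $A(\mathbb R^4,g)$ stays positive.
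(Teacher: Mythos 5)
Your construction is viable but takes a genuinely different route from the paper's. You freeze $\sigma$ and $\tau$ and extract the asymptotic curvature from the $\theta$-interpolation region via the scaling covariance $\Psi_c^*g_{\sigma_0,\tau_\infty}=c^2g_{\sigma_0/c,\tau_\infty}$ (which does hold for the explicit entries of $\mathbb G^\alpha_{\sigma,\tau}$ and can be preserved by a natural choice of interpolation). The paper instead lets both parameters decay, taking $\sigma=\epsilon\frac{r}{\log r}$ and $\tau=(\log r)^{-1/2}$: then the limit metric on local universal covers is \emph{flat}, and the asymptotic curvature of size $\epsilon$ is read off from the single nonvanishing Lie bracket $[\partial_r,\xi_\infty]=\frac{1}{r}\epsilon\,\xi_\infty$ of the limit frame, coming from the radial variation of $\sigma$.

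Two steps of your route need more than you supply. First, your appeal to Proposition \ref{prop:codimension 2 collapse} for the codimension-two collapse is not applicable: that criterion ($b/a\to0$) is derived for $a,b\to0$, whereas you have $a=\tau_\infty$ fixed. The torus $\mathbb{R}^2/\langle(1,0),(1-\tau_\infty,b)\rangle$ with $b\to0$ collapses to a point only because $1-\tau_\infty$ is irrational (for $1-\tau_\infty=p/q$ it collapses to a circle of length $\sim 1/q$); you impose irrationality at the outset but the argument at the asymptotic-cone step must actually use it, via a Dirichlet-type approximation, and the collapse rate is then non-uniform. Second, in your regime the limit $g_{0,\tau_\infty}$ is genuinely non-flat, so the existence and continuity of $\delta(\tau_\infty)$, and $\delta(\tau_\infty)\to0$, require proving smooth convergence of the metric coefficients in the unwrapped frame as $\sigma\to0$ with $\tau$ fixed — the same mechanism as in Section \ref{subsec:collapse to annulus} but not a quotation of it; you flag this honestly. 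With those two points repaired your argument gives the Proposition; the paper's parameter choice sidesteps both at the cost of a less transparent curvature mechanism.
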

\begin{remark}
	Since we can not take $\epsilon=0$,  this shows there is no gap property for asymptotic curvature if we fix the asymptotic cone. i.e., $g_\epsilon$ can not be perturbed to an $\mathcal {AF}$ metric with the same asymptotic cone. In particular, it gives a negative answer to a version of Question 2 in \cite{PT}. 
\end{remark}
\begin{proof}
Note the for the construction in Section \ref{subsec:collapse to annulus} we have  chosen  $\sigma$ and $\tau$ to be constants. Now we may choose them to depend on $r$. We will still require $\sigma/\tau\to0$ as $ r\to\infty$ so that the asymptotic cone is given by collapsing the $T^2$ orbits. We then obtain the normalized Gram matrix $\mathbb{G}_{\sigma( r),\tau( r)}$, and define a metric 
$$g=d r^2+ r^2d\theta^2+\sigma r\sin\theta\mathbb{G}_{\sigma,\tau}$$
when $ r>1$.  Set $\sigma=\epsilon\frac{ r}{\log r}$ for $\epsilon>0$ small and $\tau=\frac{1}{(\log r)^{1/2}}$.
From the construction in Section \ref{subsec:collapse to annulus}, we see that over $ r\in(2^{j-1},2^{j+1})$, the frame $\{2^{j}\partial_ r,\partial_\theta,2^j\frac{1}{\sigma}(\partial_1-\frac{1}{1-\tau}\partial_2),\partial_2\}$ along with the rescaled metric $2^{-j}g$ converges smoothly to a frame $\{\partial_ r,\partial_\theta,\xi_\infty,\partial_\infty\}$, under which the metric converges to 
$$\left(\begin{matrix}
	1 & & &\\
	 & r^2 & &\\
	& & 1 &\\
	& & &  r^2\sin^2\theta
\end{matrix}\right).$$
One can check that the Lie brackets between the limit frames are all zero except\begin{align*}
[\partial_ r,\xi_\infty]=\frac{1}{ r}\epsilon\xi_\infty.
\end{align*}
In particular, the asymptotic curvature can be made arbitrarily small by taking $\epsilon$ small.  Filling in suitably we get smooth Riemannian metrics on $\mathbb{R}^4$.
\end{proof}

In the above construction we may also choose functions $\sigma(r)$ and $\tau(r)$ to converge to zero in a comparable rate as $r\rightarrow\infty$. In this way we can achieve asymptotical flatness, but the asymptotic cone will be different. This is indeed the case for some explicit asymptotically flat metrics on $\mathbb R^4$, for example the Taub-NUT gravitational instantons. The asymptotic cone is given by $\mathbb R^3$ and the collapsing to the asymptotic cone is only of codimension 1.  

These examples demonstrate the subtlety in studying Conjecture \ref{conj:PT}.

\section{Gravitational instantons}
\label{sec:discussion}

Recall that in this paper by a gravitational instanton we mean a complete noncompact Riemannian 4-manifold $(M, g)$ with vanishing Ricci curvature and with finite energy (i.e. $\int_M |Rm_g|^2<\infty$).  For simplicity of discussion we will always assume $M$ is oriented. Our original motivation to study Conjecture \ref{conj:PT} arises from the aim of understanding the asymptotic geometry of  gravitational instantons. The topic of gravitational instantons has been extensively studied for a long time in both the mathematical and physics literature.  In this section we include a brief summary of what is known to date about gravitational instantons and list some open questions.

Recall that we may view the Riemann curvature tensor of a general Riemannian manifold as a self-adjoint endomorphism $\mathcal R$ on the bundle $\Lambda^2$ of 2-forms. Standard representation theory of $SO(n)$ decomposes $\mathcal R$ into the sum of 3 components: one involving the scalar curvature $S$, one involving the trace-free Ricci curvature $\overset{\circ}{Ric}$ and one involving the Weyl curvature $W$. In dimension 4 since $SO(4)$ is a double cover of the product $SO(3)\times SO(3)$, we have a refined decomposition.  The bundle $\Lambda^2$  splits as the direct sum of the bundle of self-dual and anti-self-dual 2-forms $\Lambda^2=\Lambda^+\oplus \Lambda^-$. Accordingly we may write $\mathcal R$ as 
\begin{equation*}
\mathcal R=\begin{pmatrix}
  W^++\frac{S}{12} & \overset{\circ}{Ric}\\ 
  \overset{\circ}{Ric} & W^-+\frac{S}{12}
\end{pmatrix}.
\end{equation*}
  At each point we may view $W^+$ as a trace-free symmetric  $3\times 3$ matrix. Following \cite{Li1} we distinguish gravitational instantons into 3 types. 

\

\begin{itemize}
	\item Type I: $W^+$ vanishes identically.  This condition is equivalent to that $g$ being locally \emph{hyper\"ahler}. Namely, locally there are 3 compatible complex structures $J_1, J_2, J_3$ satisfying the quaternion relations $J_1J_2=-J_2J_1=J_3$ which are all parallel with respect to the Levi-Civita connection and which define the given orientation. 
	
	\
	
	\item Type II: $W^+$  has exactly 2 distinct eigenvalues everywhere. It was first discovered by Derdzi\'nski \cite{derdzinski} that this condition is equivalent to saying locally there is a complex structure $J$ such that $g$ is hermitian under it. Passing to a double cover if necessary, there exists a global complex structure $J$.  Furthermore, on the double cover, the conformal metric $\tilde g\equiv (24|W^+|^2)^{\frac{1}{3}}g$ is K\"ahler extremal in the sense of Calabi and has vanishing Bach tensor. The extremal vector field for $\tilde g$ is a non-zero Killing field for $g$. 
	
	\
	
	\item Type III: $W^+$  has generically 3 distinct eigenvalues. 
\end{itemize}
\

Notice that the Type of a gravitational instanton depends on the choice of orientation.  For example, 
	the Eguchi-Hanson metric and the Taub-NUT metric are Type I with respect to the standard orientation but are Type II with respect to the reversed orientation.

By definition (global) hyperk\"ahler gravitational instantons are of Type I. These are among the most well-studied classes. In fact, often in the literature the hyperk\"ahler condition is taken for granted in the definition of gravitational instantons. There are many different constructions of hyperk\"ahler gravitational instantons and they exhibit a variety of interesting behavior at infinity.

By Bando-Kasue-Nakajima \cite{BKN} a gravitational instanton with Euclidean volume growth is ALE, i.e., it is asymptotic to a flat cone $\R^4/\Gamma$ for some $\Gamma\in SO(4)$ at a polynomial rate. ALE gravitational instantons have finite fundamental groups. All the known examples of ALE gravitational instantons are finite quotients of hyperk\"ahler gravitational instantons; indeed they have been classified \cite{Kronheimer, Souvaina, Wright}. The following is a longstanding question

\begin{conjecture}[Bando-Kasue-Nakajima \cite{BKN}]\label{BKN}
	All ALE gravitational instantons are of Type I up to reversing the orientation. 
\end{conjecture}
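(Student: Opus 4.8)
The plan is to show that an ALE gravitational instanton is \emph{half conformally flat}, i.e. either $W^+\equiv 0$ or $W^-\equiv 0$; by the trichotomy recalled above, the identification of Type I with the vanishing of $W^+$, and the fact that reversing the orientation interchanges $W^+$ and $W^-$, this is exactly the assertion of the conjecture. Since the metric is Ricci-flat, the curvature operator reduces to $\mathcal R=\mathrm{diag}(W^+,W^-)$, so all of the curvature is stored in the two Weyl pieces and the problem is precisely to rule out Type II and Type III.

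First I would record the analytic input available at the end. By Bando-Kasue-Nakajima the metric is asymptotic to $\R^4/\Gamma$ at a definite polynomial rate, so $W^\pm$ and all their derivatives decay; in particular $W^\pm\in L^2$ and the Gauss-Bonnet and signature formulas hold with explicit boundary corrections $b_\chi(\Gamma)$, $b_\tau(\Gamma)$ determined by the asymptotic group:
\begin{equation*}
\chi=\frac{1}{8\pi^2}\int_M(|W^+|^2+|W^-|^2)+b_\chi(\Gamma),\qquad
\tau=\frac{1}{12\pi^2}\int_M(|W^+|^2-|W^-|^2)+b_\tau(\Gamma).
\end{equation*}
Equivalently $2\chi+3\tau=\frac{1}{4\pi^2}\int_M|W^+|^2+b(\Gamma)$, so the conjecture for a fixed orientation is equivalent to the sharp case of an ALE Hitchin-Thorpe inequality. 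The hope is to use the Ricci-flat Weitzenb\"ock identity, which on a Ricci-flat $4$-manifold reads schematically $\nabla^*\nabla W^+=Q(W^+)$ with $Q$ a universal quadratic expression in $W^+$, together with the refined Kato inequality $|\nabla W^+|^2\ge \tfrac{5}{3}\,|\nabla|W^+||^2$ valid for the self-dual Weyl tensor, to convert the $L^2$ bound and the decay into a vanishing statement.

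For Type II I would exploit the rigid structure already quoted: passing to a double cover there is a global complex structure $J$ for which $\tilde g=(24|W^+|^2)^{1/3}g$ is extremal K\"ahler and Bach-flat, with conformal factor decaying at the ALE end. I would analyze this extremal K\"ahler metric --- whose scalar curvature is governed by a holomorphy potential subject to a LeBrun-type curvature estimate --- and try to show that compatibility with the underlying Ricci-flat ALE structure forces $|W^+|$ to be constant, hence zero, contradicting that a Type II metric has $W^+$ with exactly two distinct eigenvalues everywhere. This step is the most tractable one, being essentially K\"ahler geometry carried out on the conformally related metric $\tilde g$.

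The main obstacle is Type III. Here there is \emph{no local obstruction}: the system $\nabla^*\nabla W^+=Q(W^+)$ admits nontrivial solutions --- a hypothetical Type III instanton would be precisely such a solution --- so both $\int_M|W^+|^2$ and $\int_M|W^-|^2$ may be positive, and no pointwise maximum-principle argument can annihilate one without extra global input. Moser iteration from the Weitzenb\"ock identity yields only that $|W^+|$ cannot concentrate, not that it vanishes. I expect the decisive ingredient to be global and topological: either one shows that the boundary terms $b(\Gamma)$ attached to the asymptotic cone force the sharp case of the ALE Hitchin-Thorpe inequality, whence $\int_M|W^-|^2=0$, or one argues twistor-theoretically by exploiting integrability of the self-dual twistor space. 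Making either route precise --- in particular controlling the sign of $2\chi+3\tau-b(\Gamma)$ purely from the asymptotic data $\Gamma$ --- is the crux, and is exactly why the conjecture has remained open.
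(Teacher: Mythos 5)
This statement is a conjecture that the paper explicitly presents as open: the only content the paper offers toward it is the partial result quoted as Theorem~\ref{thn: Li1} from \cite{Li1} (Type II ALE with structure group $\Gamma\subset SU(2)$ must be the reversed Eguchi--Hanson metric), together with the observation that, granting a full Type II classification, the conjecture reduces to ruling out gravitational instantons that are Type III for both orientations. Your proposal is therefore not being measured against a proof in the paper, and more importantly it is not a proof: by your own admission the decisive steps are unresolved. Two gaps are worth naming concretely. First, the Type II step you describe as ``the most tractable'' --- showing that the extremal K\"ahler structure of $\tilde g=(24|W^+|^2)^{1/3}g$ is incompatible with a nontrivial ALE Ricci-flat end --- is precisely the content of \cite{Li1}, and even there the argument only closes when $\Gamma\subset SU(2)$; no mechanism is given for general $\Gamma\subset U(2)$, so this step cannot be waved through. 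Second, for Type III the Weitzenb\"ock identity $\nabla^*\nabla W^+=Q(W^+)$ with the refined Kato inequality yields at best improved decay or an $\epsilon$-regularity statement (vanishing under a smallness hypothesis on $\int|W^+|^2$), never unconditional vanishing; and the characteristic-number route is circular, since although $\chi$, $\tau$ and the boundary terms $b_\chi(\Gamma)$, $b_\tau(\Gamma)$ do determine $\int|W^+|^2$ and $\int|W^-|^2$ separately, proving that one of $2\chi\pm3\tau$ always equals its boundary correction for every ALE gravitational instanton is not a consequence of the asymptotic data --- it is a reformulation of the conjecture itself. (A small arithmetic point: with your normalizations $2\chi+3\tau=\frac{1}{2\pi^2}\int_M|W^+|^2+b(\Gamma)$, not $\frac{1}{4\pi^2}$.) In short, the proposal is a reasonable survey of the known strategies and correctly locates the difficulty where the paper locates it, but it supplies no new idea that would close either the general Type II case or the Type III case.
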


If there is a hypothetical Type II ALE gravitational instanton, then its structure group $\Gamma$ at infinity is conjugate to a subgroup of $U(2)$. In \cite{Li1} it is proved that 

\begin{theorem}[\cite{Li1}]\label{thn: Li1}
	The only Type II ALE gravitational instanton with structure group $\Gamma\subset SU(2)$ is given by the Eguchi-Hanson metric with the reversed orientation.
\end{theorem}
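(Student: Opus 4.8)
The plan is to exploit the conformally K\"ahler structure furnished by Derdzi\'nski's theorem, convert the problem into the classification of a compact extremal K\"ahler orbifold obtained by a conformal compactification, and then rigidify using complex geometry together with the extremal vector field. Suppose $(M,g)$ is such an instanton. Since $g$ is Ricci-flat and Type II, and since an ALE resolution of $\mathbb C^2/\Gamma$ is simply connected, Derdzi\'nski's theorem gives a globally defined integrable complex structure $J$ (compatible with the given orientation, with no double cover needed) for which $\tilde g\equiv(24|W^+|^2)^{1/3}g$ is K\"ahler, extremal, and Bach-flat. Writing $\tilde s$ for the scalar curvature of $\tilde g$, one has $\tilde s=c\,|W^+|_g^{1/3}>0$, and the Ricci-flatness of $g$ becomes the conformally-Einstein normalization $g=\tilde s^{-2}\tilde g$. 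The key reformulation of the hypothesis is that, for a K\"ahler surface, $W^+_{\tilde g}$ has eigenvalues $(\tfrac{\tilde s}{6},-\tfrac{\tilde s}{12},-\tfrac{\tilde s}{12})$; hence ``$W^+_g$ has exactly two distinct eigenvalues everywhere'' is equivalent to $\tilde s\neq 0$ on all of $M$.

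Next I would analyze the asymptotics. Because $\Gamma\subset\SU(2)$, the asymptotic cone $\mathbb C^2/\Gamma$ is a Calabi-Yau orbifold and $J$ is asymptotic to the standard complex structure, so $(M,J)$ is a complex surface asymptotic to $\mathbb C^2/\Gamma$, i.e. a (partial) resolution of the corresponding Kleinian singularity. As the curvature decays in the ALE sense, $\tilde s\to0$ at infinity, and a direct computation with the ALE model shows that $\tilde g=\tilde s^2 g$ caps off across a single added (orbifold) point, compactifying $(M,J,\tilde g)$ to a compact K\"ahler orbifold $(\hat M,\hat J,\hat g)$ carrying an extremal, Bach-flat K\"ahler metric whose scalar curvature is positive off the compactifying point and vanishes there. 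The orbifold group at that point is exactly $\Gamma$. The crucial rigidity input is the earlier reformulation: the scalar curvature of $\hat g$ vanishes at only this one (orbifold) point.

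It then remains to classify $\hat M$. The extremal vector field of $\hat g$ coincides with the Killing field of $g$, which at infinity is the generating rotation of $\mathbb C^2/\Gamma$; its zero set, together with the very restrictive condition that the scalar curvature have an isolated zero, forces $\hat M$ to be a highly symmetric rational orbifold surface. Combining the complex-geometric classification of resolutions of $\mathbb C^2/\Gamma$ with the analytic constraints of the Bach-flat extremal system (via the Futaki/Calabi obstruction and LeBrun's results on Bach-flat K\"ahler surfaces), one shows that the only surviving case is the $A_1$ singularity $\Gamma=\mathbb Z_2$, with $\hat M=\mathbb P(1,1,2)$ and a rigid extremal metric. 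Undoing the conformal change and the compactification then identifies $(M,g)$ with the Eguchi-Hanson metric in the reversed orientation.

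The main obstacle is precisely this last step: showing that no higher resolution and no larger group $\Gamma$ can support a compact extremal, Bach-flat K\"ahler metric whose scalar curvature has its only zero at the single point at infinity, and establishing rigidity of the metric in the surviving case. This requires an interplay between the complex geometry of the ADE resolutions and the overdetermined Bach-flat extremal equation, with careful bookkeeping of the orbifold structure at infinity and of the sign and vanishing order of $\tilde s$. By contrast, the auxiliary points---the global existence of $J$ in the given orientation and the smoothness (in the orbifold sense) of the conformal compactification---should be comparatively routine.
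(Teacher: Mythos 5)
This statement is quoted in the paper from \cite{Li1}; the survey gives no proof of it, so there is nothing internal to compare against line by line. Judged on its own terms, your proposal correctly identifies the framework that the cited work actually uses (Derdzi\'nski's conformally K\"ahler structure, the reformulation of Type II as nonvanishing of the conformal scalar curvature, conformal compactification, and rigidity of Bach-flat extremal K\"ahler surfaces), but it is a plan rather than a proof: the step you yourself flag as ``the main obstacle'' --- showing that no group $\Gamma\subset SU(2)$ other than $\mathbb Z_2$ and no partial resolution can carry a compact extremal Bach-flat K\"ahler orbifold metric with scalar curvature vanishing exactly at the added point, and proving rigidity in the surviving case --- is the entire mathematical content of the theorem, and nothing in the proposal indicates how it would be carried out. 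Deferring it means the proposal does not establish the statement.

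Beyond that structural gap, several of the steps you call ``comparatively routine'' are not. First, the claim that $J$ is asymptotic to the standard complex structure on $\mathbb C^2/\Gamma$ does not follow from ALE decay alone: $J$ is determined by the distinguished eigenspace of $W^+$, and as $|W^+|\to 0$ at infinity the eigenvalue gap closes, so the eigenspace (hence $J$) is a priori uncontrolled exactly where you need it. The standard remedy is to first show the extremal field $\mathcal K=J\nabla_{\tilde g}S(\tilde g)$ is a nontrivial Killing field of $g$ (if it vanished, $S(\tilde g)$ would be a positive constant and $g$ itself would be K\"ahler Ricci-flat, contradicting Type II) and to exploit the resulting isometric $S^1$-action on the ALE end; your write-up invokes the Killing field only at the classification stage, not to control the asymptotics of $J$. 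Second, the positivity $\tilde s>0$ is not automatic from $\tilde s^2=\lambda^{2/3}$; it requires the maximum-principle argument recorded as Lemma \ref{lem:scalar curvature positive} in this paper. Third, your dismissal of the double cover relies on $M$ being a simply connected resolution of $\mathbb C^2/\Gamma$, which is part of what must be proved, and the regularity of the one-point conformal compactification requires the precise decay order of $|W^+|$ on the end rather than a formal model computation. None of these is fatal to the strategy, but each needs an argument before the final classification step can even be posed.
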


This gives supporting evidence to Conjecture \ref{BKN}. It is likely that the result holds without the restriction  $\Gamma\subset SU(2)$. If so then Conjecture \ref{BKN} amounts to ruling out gravitational instantons which are Type III for both orientations.

\

When a gravitational instanton does not have Euclidean volume growth, the geometry at infinity is much richer. In the hyperk\"ahler case we have

\begin{theorem}[\cite{SZ}] \label{SZ}
	A non-flat hyperk\"ahler gravitational instanton must be conical and asymptotic to one of the following 6 families of model ends:  $ALE, ALF, ALG, ALH, ALG^*, ALH^*$. 
\end{theorem}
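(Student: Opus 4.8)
The plan is to control the curvature at infinity, extract a \emph{unique} tangent cone, and then read off the model end from the collapsing structure, using the hyperk\"ahler triple to pin down the local geometry. First I would establish quadratic curvature decay. Since $g$ is Ricci-flat (indeed hyperk\"ahler), the Cheeger-Tian $\epsilon$-regularity theorem for Einstein $4$-manifolds applies: on a far annulus of radius $\sim r$ one has $|\Rm_g|\le C r^{-2}\big(\int_{\{r/2\le s\le 2r\}}|\Rm_g|^2\big)^{1/2}$. As the total energy is finite the annular energies tend to $0$, so $|\Rm_g|=o(r^{-2})$ away from collapsed regions; the nilpotent collapse of the ALH$^*$ case is precisely where this can remain genuinely quadratic. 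In all cases the rescaled annuli $(M, r^{-2}g)$ have uniformly bounded curvature, which is exactly what the subsequent collapsing analysis requires.

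Next I would analyze the volume growth and the tangent cone at infinity. Ricci-flatness gives $\Ric_g\ge 0$, so by Bishop-Gromov the ratio $\Vol(B(p,r))/r^4$ is monotone and the volume growth order is well defined, and by Cheeger-Colding every tangent cone at infinity is a metric cone. The two key assertions are that this tangent cone is unique---so that $(M,g)$ is conical---and that its dimension is one of $4,3,2,1$. Uniqueness I would obtain from the scale-invariant curvature bound: on non-collapsed scales the rescaled annuli converge smoothly, while on collapsed scales the bounded-curvature hypothesis lets one run Cheeger-Fukaya-Gromov theory \cite{CFG} to produce a torus (or nilpotent) structure whose quotient is the smooth cross-section of the cone, and a monotonicity argument then prevents the cross-section from rotating between dyadic scales.

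Finally I would classify by the dimension of the tangent cone. If it is $4$-dimensional the volume growth is Euclidean and Bando-Kasue-Nakajima \cite{BKN} gives ALE. Otherwise the end collapses with bounded curvature, and the Cheeger-Fukaya-Gromov fibration over the cross-section reads off the skeleton of the model: an $S^1$-fibration over a $3$-dimensional cone gives ALF, a $T^2$-fibration over a $2$-dimensional cone gives ALG, and a $T^3$-fibration over a ray gives ALH. The three parallel hyperk\"ahler $2$-forms descend to a Gibbons-Hawking-type (monopole) structure on the base, and the precise way the fibers degenerate toward the apex distinguishes the generic families from the degenerate models ALG$^*$ and ALH$^*$.

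The main obstacle is twofold. Establishing uniqueness of the tangent cone in the collapsing regime is delicate: for general $\mathcal{AF}$ metrics no such rigidity holds, so one must use the Ricci-flat equation essentially, controlling the monodromy of the torus fibration across all dyadic scales, which is where the finite-energy and curvature-decay inputs enter. The harder part is pinning down the starred models, above all the nilpotent ALH$^*$ end, which is not $\mathcal{AF}$; there one must analyze the Heisenberg collapse and solve the relevant Gibbons-Hawking / monopole equation on the base, identifying the admissible singular behavior of the hyperk\"ahler triple near the degenerate fibers. This local model analysis is the most technical and case-specific portion of the argument.
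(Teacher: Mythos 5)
First, note that the paper does not prove this statement: it is quoted from \cite{SZ}, and the paper only records a one-paragraph description of the strategy there (Cheeger--Fukaya--Gromov collapsing theory combined with adiabatic PDE perturbation, with the central fact that a hyperk\"ahler $4$-metric with a continuous triholomorphic symmetry is locally given by a positive harmonic function on $\R^3$). Your outline touches the right ingredients at the top level, but it has genuine gaps at exactly the points where the theorem is hard. The most serious one is the tangent-cone step. You invoke Cheeger--Colding to say every tangent cone at infinity is a metric cone; that theorem requires Euclidean volume growth, and in the collapsed regime (which is every case except ALE) it simply does not apply. Conicality is a \emph{conclusion} of \cite{SZ}, not an input, and the present paper explicitly lists ``Is every gravitational instanton conical?'' as an open question in general -- so no soft argument can produce it. Likewise, the ``monotonicity argument preventing the cross-section from rotating between dyadic scales'' does not exist as stated; there is no known monotonicity formula playing this role for collapsed Ricci-flat ends, and uniqueness of the asymptotic cone in \cite{SZ} comes out of first classifying the end structure via the harmonic-function description and only then reading off the cone.

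Two further points. Your $\epsilon$-regularity step overstates what \cite{CT} gives: the collapsed Cheeger--Tian theorem yields $|\Rm|=O(r^{-2})$ (a bound, not one proportional to the annular energy), and this is sharp -- the ALH$^*$ ends have exactly quadratic decay even though their annular energies tend to zero, so the estimate $|\Rm|\le Cr^{-2}(\int|\Rm|^2)^{1/2}$ you write down is false in the collapsed setting (it would force every gravitational instanton to be $\mathcal{AF}$). What you actually need and legitimately get is bounded curvature on rescaled annuli. Finally, the step from ``CFG produces a torus/nilpotent structure'' to ``the hyperk\"ahler triple descends to a Gibbons--Hawking structure on the base'' is where most of the work in \cite{SZ} lives: the CFG symmetry is only approximate and has no a priori compatibility with the complex structures, and the adiabatic perturbation argument is precisely what upgrades it to a genuine (local) triholomorphic symmetry so that the positive-harmonic-function description, and hence the list of six model ends including the inhomogeneous ALG$^*$ and ALH$^*$ cases, can be extracted. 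As written, your proposal asserts this descent rather than proving it.
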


Roughly speaking, the $AL\sharp(^*)$ model ends have volume growth of order $(4-(\sharp-E))$. The 4 families without $^*$ are  locally given by flat torus bundles over a domain in the Euclidean space; the 2 families with $*$ have inhomogeneous geometries at infinity.  A key common feature is that all the model ends admit \emph{local nilpotent symmetries}.

The proof of Theorem \ref{SZ} exploits a combination of Riemannian collapsing theory of Cheeger-Fukaya-Gromov and the technique of  adiabatic PDE perturbation. It came as a by-product of a more general study of collapsing geometry of 4 dimensional hyperk\"ahler metrics.   One simple but central fact is that a hyperk\"ahler metric with a continuous symmetry is locally given by a positive harmonic function on $\R^3$. There is an earlier work Chen-Chen \cite{Chen-Chen1} proving a special case of theorem under the extra  assumption of faster than quadratic curvature decay. Notice that $ALE, ALF, ALG, ALH$ have faster than quadratic curvature decay,  $ALG^*$ is $\mathcal{AF}$ but not strongly $\mathcal{AF}$, and $ALH^*$ is not even $\mathcal{AF}$.

Given Theorem \ref{SZ} the next question is to classify all hyperk\"ahler gravitational instantons asymptotic to a given model end. This is essentially completed due to the work of many people, see for example \cite{Kronheimer, Chen-Chen2, Chen-Chen3, SZ19, SZ, HSVZ2, CJL1, LL, CVZ}. In particular, Torelli type theorems are proved, which gives a classification of AL$\sharp$ hyperk\"ahler gravitation instantons in terms of the topological data of periods. The strategy is to use the prescribed asymptotics to compactify the underlying complex manifold (for particular choices of the complex structure) into a K\"ahler surface by adding certain divisor at infinity, and then describe the metric in terms of the (suitably generalized) \emph{Tian-Yau} construction. One idea to prove the Torelli theorem is to use gluing method to prove that an AL$\sharp$ gravitational instanton (satisfying certain topological bound when $\sharp=E, F$) arises as models of singularity formation for a sequence of hyperk\"ahler metrics on the K3 manifold, and then refer to the classical Torelli theorem for K3 manifolds. The original proof of the surjectivity part for K3 Torelli makes use of Yau's proof of the Calabi conjecture; more recently Liu \cite{Liu} gave a new proof using instead the compactness result of \cite{SZ}.

 The upshot is that we have a relatively complete understanding of hyperk\"ahler gravitational instantons. The next natural question is
 \begin{problem}
 	Classify Type I gravitational instantons. 
 \end{problem}
  Notice that the universal cover of a Type I gravitational instanton is always a complete hyperk\"ahler manifold, but the energy is finite only when the fundamental group is finite. 
  In the case of finite fundamental group the problem reduces to the more manageable 
 
 \begin{question}
 	Classify free action of finite groups on hyperk\"ahler gravitational instantons. 
 \end{question}

The case of infinite fundamental group is more subtle. One example is given by the flat manifold $Y_{\alpha, \sigma}$ introduced in Section 3.1. The asymptotic geometry at infinity depends on the rationality of $\alpha$. 
\begin{question}
Is there a non-flat Type I gravitational instanton $(X,g )$  with $\pi_1(X)$ infinite? 	
\end{question}

If such an example exists, then the universal cover $\tilde X$ must have infinite energy. The only known examples of infinite energy hyperk\"ahler 4-manifold are constructed using the Gibbons-Hawking ansatz \cite{AKL, Hattori}. 

In general the study of infinite energy complete Ricci-flat metrics is an interesting problem. We have the following conjecture in \cite{SZ}

\begin{conjecture}[Finite energy vs finite topology]
A complete Ricci-flat 4-manifold has finite energy if and only if it has finite topological type.	
\end{conjecture}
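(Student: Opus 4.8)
The plan is to prove the two implications by rather different means, since one is essentially a tameness statement and the other a decay statement.

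For \emph{finite energy} $\Rightarrow$ \emph{finite topological type}, the starting point is the Chern--Gauss--Bonnet formula in dimension four: on a Ricci-flat manifold $|Rm_g|^2=|W|^2$, so the Euler number of any compact exhaustion is controlled by $\frac{1}{8\pi^2}\int |Rm_g|^2$ up to boundary terms at infinity. Finite total energy therefore caps the topology in the interior, and the real task is to control the ends. First I would apply the $\epsilon$-regularity theorem of Bando--Kasue--Nakajima \cite{BKN} (and Anderson): on any ball where the local energy is below a universal threshold and the volume is non-collapsed, the curvature is pointwise bounded. Since $\int|Rm_g|^2<\infty$, all curvature concentration is confined to finitely many regions, so outside a compact set $M$ is a finite union of ends carrying arbitrarily small local energy. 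On each such end I would pass to the asymptotic geometry and run the Cheeger--Fukaya--Gromov collapsing analysis exactly as in Section~\ref{sec:proof}, producing a singular fibration by infranilmanifolds over a lower-dimensional base. The rigidity of this fibration, together with a uniform per-end volume (hence energy) lower bound, would then give both the finiteness of the number of ends and the topological tameness of each, i.e. finite topological type.

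For the converse \emph{finite topological type} $\Rightarrow$ \emph{finite energy}, the hypothesis is purely topological: $M$ is the interior of a compact manifold with boundary, hence has finitely many ends, each diffeomorphic to $N\times[0,\infty)$ with $N$ a closed $3$-manifold. The strategy is to show that the Ricci-flat equation together with this fixed end topology forces enough curvature decay for $\int|Rm_g|^2$ to converge on the end. I would reduce the Einstein equation on each end to a model problem using the local nilpotent symmetries that govern the asymptotics, as in Section~\ref{subsec:model flat metric}; in the simplest case this is the Gibbons--Hawking description by a positive harmonic function on $\mathbb R^3$, and finiteness of the topology of $N$ translates into a finite number of poles, which is precisely what makes the model energy finite. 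A monotonicity argument for the energy along the exhaustion, combined with the collapsing structure, would then upgrade the finiteness from the model to the general end.

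The main obstacle, in both directions, is the complete absence of any a priori curvature decay hypothesis: unlike the setting of Theorem~\ref{thm} one may not even assume the end is conical, so the asymptotic cone need not be unique and the collapsing at infinity can occur along a genuinely varying nilpotent structure. Proving that a finite-energy Ricci-flat end is automatically conical is itself a hard uniqueness-of-tangent-cone problem, and conversely controlling the precise interplay between the topology of $N$ and the rate of curvature decay --- so as to exclude simultaneously the finite-topology/infinite-energy and the infinite-topology/finite-energy ends --- is exactly the delicate point that keeps the conjecture open.
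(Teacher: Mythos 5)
This statement is a \emph{conjecture} (attributed to \cite{SZ}); the paper offers no proof of it, and your proposal does not close it either --- it is a research program rather than a proof, as you yourself concede in your final paragraph. To be concrete about where it breaks down. In the direction finite energy $\Rightarrow$ finite topological type, the usable input is Cheeger--Tian $\epsilon$-regularity \cite{CT}, which for a finite-energy Einstein $4$-manifold gives quadratic curvature decay, i.e.\ the manifold is $\mathcal{AF}$; the Bando--Kasue--Nakajima regularity you lead with requires volume non-collapsing and is exactly what fails on collapsed ends. From $\mathcal{AF}$ one cannot yet ``run the Cheeger--Fukaya--Gromov analysis exactly as in Section~\ref{sec:proof}'': that section assumes the end is \emph{conical}, and whether a finite-energy Ricci-flat end is conical is listed in Section~\ref{sec:discussion} as an open question (it is known only under the strongly $\mathcal{AF}$ hypothesis, by Kasue \cite{Kasue}). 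Without a unique asymptotic cone the fibration structure on annuli need not stabilize, and neither the finiteness of ends nor the tameness of a single end follows; the Gauss--Bonnet boundary terms are likewise uncontrolled without asymptotic structure. So the first direction reduces to, rather than resolves, the conicity problem.

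The converse direction is in worse shape: with no energy bound there is no $\epsilon$-regularity at all, hence no a priori curvature decay, no collapsing structure, and no ``local nilpotent symmetries'' to invoke --- those are conclusions of Theorem~\ref{SZ}, which takes finite energy as a hypothesis. The Gibbons--Hawking reduction applies only to hyperk\"ahler metrics with an isometric $S^1$-action, not to general Ricci-flat $4$-manifolds of finite topological type, and there is no known monotonicity formula for $\int|Rm|^2$ along an exhaustion. Indeed the infinite-energy Gibbons--Hawking examples of \cite{AKL, Hattori} are precisely the supporting evidence for the conjecture (they have infinite topology as well), not a mechanism for proving it. In short: both implications are open, your sketch correctly identifies the obstructions (uniqueness of tangent cone; the topology-versus-decay interplay) but supplies no new idea to overcome them, and it should not be presented as a proof.
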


The study of Type II gravitational instantons has only emerged in the recent few years. The known examples are all  ALF or AF, except the Eguchi-Hanson metric with the reversed orientation, which is ALE. Recall that AF model ends are introduced in Section \ref{subsec:model flat metric}.
Known ALF and AF Type II gravitational instantons include 
\begin{itemize}
\item  the Taub-NUT metric on $\R^4$ (with the reversed orientation);
\item  the Taub-Bolt metric on $\C\mathbb P^2\setminus\{point\}$ (with both orientations);
\item the Kerr metrics on $\R^2\times S^2$ (including the Schwarzschild metrics);
\item the Chen-Teo metrics $\C\mathbb P^2\setminus S^1$ (\cite{Chen-Teo, AA}).	
 \end{itemize}
 The first two classes are ALF and the second two classes are AF. 
 These metrics can all be written down explicitly in terms of algebraic formulae and they are all toric (i.e. their isometry group contains a 2 dimensional torus $T^2$). It was discovered by Biquard-Gauduchon \cite{BG} that all of them can be re-constructed using the LeBrun-Tod ansatz in complex geometry in terms of axi-symmetric harmonic functions in $\R^3$, and moreover any toric Type II ALF/AF gravitational instanton must belong to one of the 4 above families.
 
If $(M, g)$ is Type II, then suppose there is a global complex structure $J$, which can always be achieved by passing to a double cover. We denote $\lambda\equiv \sqrt{24}|W^+(g)|>0$, and $\tilde g=\lambda^{2/3} g$ the conformal metric.  Then $\tilde g$ is K\"ahler with respect to an integrable complex structure $J$. In fact, $\tilde g$ is extremal in the sense of Calabi, namely, the extremal vector field $\mathcal K\equiv J\nabla_{\tilde g}S(\tilde g)$ is a non-zero holomorphic Killing field. Moreover, we have $S(\tilde g)^2=\lambda^{2/3}$.  The following is an easy fact. 

\begin{lemma}\label{lem:scalar curvature positive}
	$S(\tilde g)>0$. 
\end{lemma}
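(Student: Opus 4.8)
The plan is to exploit the fact that $S(\tilde g)$ is nowhere vanishing, combined with the conformal transformation law for scalar curvature and a maximum principle. Since $S(\tilde g)^2 = \lambda^{2/3} = (24|W^+|^2)^{1/3} > 0$ everywhere on the connected manifold $M$, the function $S(\tilde g)$ never vanishes and hence has a constant sign; it therefore suffices to rule out the possibility that $S(\tilde g) < 0$ everywhere. I would set $\phi \equiv \lambda^{1/3} = |S(\tilde g)| > 0$, which is smooth because $W^+$ is nowhere zero for a Type II instanton, and record that $\tilde g = \lambda^{2/3} g = \phi^2 g$.

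First I would apply the standard conformal transformation law for the scalar curvature in dimension four. Writing $\tilde g = \phi^2 g$ and using that $g$ is Ricci-flat, so $S(g) = 0$, one obtains
\begin{equation*}
S(\tilde g) = -6\,\phi^{-3}\,\Delta_g \phi,
\end{equation*}
where $\Delta_g$ denotes the Laplace--Beltrami operator of $g$ (with the convention that it is nonpositive at an interior maximum). Since $\phi = |S(\tilde g)|$, this rearranges to
\begin{equation*}
\Delta_g \phi = -\tfrac{1}{6}\,S(\tilde g)\,\phi^3 = -\tfrac{1}{6}\,\mathrm{sign}(S(\tilde g))\,\phi^4,
\end{equation*}
so that the sign of $S(\tilde g)$ is precisely the sign which determines whether $\phi$ is sub- or super-harmonic.

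Next, I would argue by contradiction, assuming $S(\tilde g) < 0$. Then $\Delta_g \phi = \tfrac16 \phi^4 > 0$, so $\phi$ is strictly subharmonic. The metric $g$ is complete with $\Ric_g = 0$, in particular with Ricci curvature bounded below, and $\phi$ is bounded above since the curvature of a gravitational instanton is bounded; hence the Omori--Yau maximum principle applies and produces a sequence $x_k$ with $\phi(x_k) \to \sup_M \phi$ and $\limsup_k \Delta_g\phi(x_k) \le 0$. As $\sup_M \phi \ge \phi(x_0) > 0$ for any fixed $x_0$, this forces $\Delta_g\phi(x_k) = \tfrac16\phi(x_k)^4 \to \tfrac16(\sup_M\phi)^4 > 0$, contradicting $\limsup_k \Delta_g\phi(x_k)\le 0$. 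Therefore $S(\tilde g) > 0$.

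The steps requiring care are bookkeeping rather than substance: fixing the sign convention in the conformal Laplacian so that the constant $-\tfrac16$ and the direction of the inequality come out correctly, and justifying that $\phi$ is bounded above. For the latter one may invoke the standard fact that complete Ricci-flat four-manifolds of finite energy have bounded curvature; alternatively, since such instantons have curvature decaying to zero at infinity, one has $\phi \to 0$ there, and the Omori--Yau step can be replaced by the ordinary maximum principle on a large geodesic ball, on which the positive function $\phi$ attains its supremum in the interior. I expect the main (and essentially only genuine) obstacle to be pinning down the precise constant and sign in the conformal formula; once that is fixed, the maximum principle argument is immediate, which is why the statement is "easy."
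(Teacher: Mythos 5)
Your proof is correct and is essentially the paper's argument: both use that $S(\tilde g)^2=\lambda^{2/3}>0$ forces a constant sign, the conformal transformation law of the scalar curvature with $S(g)=0$, and curvature decay (via $\epsilon$-regularity) to locate an extremum where the maximum principle gives a contradiction. The only difference is cosmetic: the paper computes with $\Delta_{\tilde g}$ applied to $S(\tilde g)^{-1}$ at its attained maximum, while you compute with $\Delta_g$ applied to $\phi=|S(\tilde g)|$ (with Omori--Yau as an unneeded safety net).
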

\begin{proof}

	Suppose otherwise, then $S(\tilde g)<0$.  By Cheeger-Tian's $\epsilon$-regularity theorem \cite{CT} we know $|Rm(g)|$ decays quadratically fast at infinity. In particular we have $S(\tilde g)\rightarrow 0$. So $S(\tilde g)$ must achieve a global negative minimum at some point, say $p$, which is the global maximum of $S(\widetilde{g})^{-1}$. 	From the formula of scalar curvature under conformal change we have 
	$$S(\widetilde{g})^3\left(-6\Delta_{\widetilde{g}}+S(\widetilde{g})\right)S(\widetilde{g})^{-1}=0.$$
	But at $p$ this gives $\Delta_{\tilde g}(S(\tilde g)^{-1})\geq 0$. Contradiction. 
\end{proof}

Similar to classification of asymptotic geometry of hyperk\"ahler metrics at infinity, Theorem \ref{SZ}, we have 

\begin{theorem}[\cite{Li2}]
	A Type II Ricci-flat metric with $\int|Rm|^2<\infty$ on an end that is complete at infinity must be conical and asymptotic to one of the following families of model ends: ALE, ALF, AF, skewed special Kasner, $\text{ALH}^{*}$, or further $\mathbb{Z}_2$ quotients of the AF, skewed special Kasner, $\text{ALH}^{*}$ models.
\end{theorem}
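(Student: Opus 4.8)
The plan is to adapt the strategy behind the hyperk\"ahler classification of Theorem \ref{SZ}, replacing the Gibbons-Hawking ansatz by its Type II counterpart. The first reduction is to produce a global symmetry. Passing to a double cover if necessary, one has a global complex structure $J$, and by Derdzi\'nski's theorem \cite{derdzinski} the conformal metric $\tilde g$ is Bach-flat K\"ahler extremal with $g = S^{-2}\tilde g$, where $S \equiv S(\tilde g) > 0$ by Lemma \ref{lem:scalar curvature positive}; moreover $\mathcal K = J\nabla_{\tilde g}S$ is a nonzero Killing field for $g$. The $\mathbb{Z}_2$-quotient models appearing in the statement are exactly the cases in which this double cover is nontrivial, so after classifying on the cover one descends at the end. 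Since the end has finite energy, the Cheeger-Tian $\epsilon$-regularity theorem \cite{CT} gives quadratic curvature decay $|Rm_g| = O(r^{-2})$, whence $S \to 0$ at a controlled rate; this is the quantitative input that makes the asymptotics tractable.

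The core of the argument is the symmetry reduction. Because $\mathcal K$ is a globally defined holomorphic Hamiltonian Killing field with momentum $S$, the extremal K\"ahler metric is locally governed by the LeBrun-Tod reduction, i.e. by a Toda-type ($\SU(\infty)$) equation for which $S$ plays the role of the distinguished coordinate; when a second commuting symmetry is present this becomes the axisymmetric harmonic-function description on $\R^3$ used by Biquard-Gauduchon \cite{BG}. This is the Type II analogue of the fact that a hyperk\"ahler metric with a continuous symmetry is encoded by a positive harmonic function on $\R^3$, the crucial difference being that the governing equation is now nonlinear. I would then run the adiabatic PDE-perturbation scheme of \cite{SZ}, combined with Cheeger-Fukaya-Gromov collapsing theory \cite{CFG} applied to the rescaled annuli $(M, \lambda^{-2}g)$, to show that the end collapses with a nilpotent local symmetry refining $\mathcal K$ and that it is conical with a well-defined asymptotic cone.

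The classification then organizes by the codimension of collapse, equivalently by volume growth, exactly along the $AL\sharp(^*)$ scheme. Codimension $0$ is the non-collapsed ALE case with cone $\R^4/\Gamma$; codimension $1$ gives ALF with cone $\R^3$ and a collapsing circle; codimension $2$ gives AF, where a torus fiber collapses and the Gram-matrix analysis and the quantities $\sigma, \tau$ from Section \ref{sec:proof} describe the degeneration; and codimension $3$ separates into the skewed special Kasner and $\text{ALH}^*$ models according to the precise asymptotics of $S$ and to whether $|Rm_g| = o(r^{-2})$ (the $\mathcal{AF}$, hence AF/Kasner, regime) or only $|Rm_g| = O(r^{-2})$ (the $\text{ALH}^*$ regime, which is not $\mathcal{AF}$). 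In each case the model is pinned down by matching the asymptotic solution of the LeBrun-Tod/Toda system, and descending through the double cover produces the listed $\mathbb{Z}_2$-quotients of the AF, skewed special Kasner, and $\text{ALH}^*$ models.

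I expect the main obstacle to be the fine asymptotic analysis of the nonlinear Toda-type equation in the collapsed regimes, together with proving completeness of the list, i.e. that no other asymptotic model arises. Unlike the linear Gibbons-Hawking setting, one cannot simply invoke harmonic-function theory; instead one must control the deviation of $S$ and of the fiber metric from the model solutions with enough precision to rule out oscillation (which would obstruct conicality) and to distinguish the codimension-$3$ cases. Establishing the sharp curvature-decay dichotomy that separates the skewed special Kasner model from $\text{ALH}^*$ seems the most delicate point.
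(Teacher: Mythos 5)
First, note that the paper does not actually prove this statement: it is quoted from \cite{Li2}, and the only indication given of the method is that the companion result (Theorem \ref{thm:Li2 collapsed}) ``depends on studying the compactification of the underlying complex manifold to a log del Pezzo surface, the theory of complex surfaces, and the geometry of complete extremal K\"ahler metrics.'' Your outline instead transplants the Riemannian-collapsing and adiabatic-perturbation strategy of \cite{SZ}, which is a genuinely different route; but as written it is a plan rather than a proof, and the two steps you yourself flag as obstacles --- the asymptotic analysis of the nonlinear reduced equation and the completeness of the list --- are precisely where all the content lies. Two concrete problems. (i) The reduction you lean on is not available a priori: Derdzi\'nski's theorem gives exactly one Killing field $\mathcal K=J\nabla_{\tilde g}S(\tilde g)$, so the Biquard--Gauduchon axisymmetric harmonic-map picture (which needs a $T^2$ of symmetries) does not apply, and with a single symmetry one is left with a nonlinear Toda-type system. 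The adiabatic scheme of \cite{SZ} is built on the \emph{linear} Gibbons--Hawking structure (a positive harmonic function on $\R^3$), and you give no substitute for the linear theory that drives both the conicality and the exhaustiveness arguments there. You also do not explain why ALG and ALH, which do occur in the hyperk\"ahler Theorem \ref{SZ}, are absent from the Type II list --- this exclusion is part of what must be proved.

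(ii) Your proposed dichotomy in the codimension-three regime is wrong. You separate skewed special Kasner from $\text{ALH}^*$ by whether $|Rm_g|=o(r^{-2})$ or only $O(r^{-2})$, placing Kasner in the ``$\mathcal{AF}$'' regime. But the paper states explicitly that the skewed special Kasner ends are \emph{not} $\mathcal{AF}$: the metric $d\rho^2+\rho^{4/3}dx_1^2+\rho^{4/3}dx_2^2+\rho^{-2/3}dx_3^2$ has curvature decaying exactly quadratically, just as $\text{ALH}^*$ fails to be $\mathcal{AF}$. Both families sit in the $O(r^{-2})$-but-not-$o(r^{-2})$ regime, so your criterion cannot distinguish them; the actual separation must come from volume growth (quadratic for Kasner versus $r^{4/3}$ for $\text{ALH}^*$) or equivalently from the asymptotics of $S(\tilde g)$. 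The parts of your sketch that do align with the paper's surrounding discussion --- the double cover and global $J$, positivity of $S(\tilde g)$ from Lemma \ref{lem:scalar curvature positive}, quadratic decay from Cheeger--Tian \cite{CT}, and the organization by codimension of collapse --- are correct setup, but they are the easy part.
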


Notice that the $\text{ALH}^*$ model ends are Type II under the orientation opposite to their hyperk\"ahler orientation. The \textit{skewed special Kasner models} are quotients of the following special Kasner metric
\begin{equation}
	d\rho^2+\rho^{4/3}dx_1^2+\rho^{4/3}dx_2^2+\rho^{-2/3}dx_3^2
\end{equation}
on $[1,\infty)\times T^3$. These model ends are not $\mathcal{AF}$. The special Kasner metric is Type II under both orientations, since it is hermitian under the complex structures $J_1:d\rho\mapsto\rho^{-1/3}dx_3,dx_1\mapsto dx_2$ and $J_2:d\rho\mapsto-\rho^{-1/3}dx_3,dx_1\mapsto dx_2$. Note that there is indeed a family of Riemannian Kasner metrics that are Ricci-flat, but only the specific one mentioned above is Type II. That is why we refer to it as \textit{special Kasner}.
Notice that by Lemma \ref{lem:scalar curvature positive} the skewed special Kasner models and $\text{ALH}^*$ models cannot be filled in as Type II gravitational instantons.  We moreover have

\begin{theorem}[\cite{Li2}]\label{thm:Li2 collapsed}
	A Type II gravitational instanton with non-Euclidean volume growth must belong to the above 4 families. 
\end{theorem}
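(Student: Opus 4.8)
The plan is to reverse-engineer the preceding asymptotic classification: pin down the admissible model ends, eliminate all the non-$\mathcal{AF}$ ones, upgrade the instanton to a \emph{toric} one, and then invoke the Biquard--Gauduchon classification \cite{BG}.

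First I would apply the preceding theorem, which asserts that a Type II Ricci-flat end with $\int|Rm|^2<\infty$ is conical and asymptotic to one of ALE, ALF, AF, skewed special Kasner, $\text{ALH}^*$, or a $\mathbb{Z}_2$ quotient of the last three. The non-Euclidean volume growth hypothesis immediately removes the ALE model, which is the only one with quartic growth. To discard the remaining non-$\mathcal{AF}$ possibilities I would invoke Lemma \ref{lem:scalar curvature positive}: the conformal metric $\tilde g=\lambda^{2/3}g$ is extremal K\"ahler with $S(\tilde g)>0$. Since the skewed special Kasner and $\text{ALH}^*$ models (hence also their $\mathbb{Z}_2$ quotients) are incompatible with a positive extremal scalar curvature on the end---equivalently, they cannot be filled in as Type II gravitational instantons---these are ruled out. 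This leaves exactly the ALF, AF, and AF/$\mathbb{Z}_2$ cases.

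The heart of the argument, and the step I expect to be the main obstacle, is to produce a holomorphic isometric $T^2$ action, i.e., to show the instanton is toric. After passing to the orientation double cover if necessary to globalize the complex structure $J$ (on which an AF/$\mathbb{Z}_2$ end becomes genuinely AF), the extremal vector field $\mathcal K=J\nabla_{\tilde g}S(\tilde g)$ already furnishes one nonzero holomorphic Killing field, hence one $S^1$ factor; the task is to find a second commuting Killing field. My strategy is to reduce the extremal K\"ahler structure by the $S^1$ generated by $\mathcal K$ using the LeBrun--Tod ansatz, which encodes $\tilde g$ in terms of an axisymmetric harmonic function on a domain in $\R^3$, as in \cite{BG}. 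The ALF, AF, and AF/$\mathbb{Z}_2$ model ends are all themselves axisymmetric, so the reduced data are \emph{asymptotically} invariant under rotation about the relevant axis. One then propagates this rotational symmetry inward: because $(M,g)$ is Ricci-flat it is real-analytic, so the reduced elliptic system admits unique continuation and a maximum-principle argument forces the axisymmetry to hold globally, producing a genuine second Killing field and hence the full $T^2$ action. Here Theorem \ref{thm} supplies useful topological input in the AF case, guaranteeing that the end carries the non-simply-connected topology compatible with a $T^2$ symmetry and excluding a spurious collapse to $\mathbb{H}$ along a simply-connected end.

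Finally, with the $T^2$ symmetry established, I would apply the Biquard--Gauduchon theorem \cite{BG}: every toric Type II ALF or AF gravitational instanton is reconstructed from an axisymmetric harmonic function on $\R^3$ and must coincide with one of the Taub-NUT, Taub-Bolt, Kerr, or Chen-Teo families. Descending from the double cover if one was taken identifies $(M,g)$ with one of these four families, completing the proof. The entire difficulty is concentrated in establishing the second symmetry; the elimination of the non-$\mathcal{AF}$ models and the concluding appeal to \cite{BG} are routine given the cited results.
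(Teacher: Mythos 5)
Your reduction to the ALF/AF case is sound and matches what the paper itself indicates: the asymptotic classification theorem lists the possible model ends, non-Euclidean volume growth excludes ALE, and the paper explicitly notes that Lemma \ref{lem:scalar curvature positive} rules out the skewed special Kasner and $\text{ALH}^*$ models (and hence their $\mathbb{Z}_2$ quotients) as ends of global Type II gravitational instantons. Up to that point you are following the intended route.

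The gap is in the step you yourself flag as the heart of the matter: producing the second Killing field. Your argument is that the ALF/AF model ends are axisymmetric, so the reduced LeBrun--Tod data are \emph{asymptotically} axisymmetric, and that real-analyticity plus unique continuation then ``propagates the symmetry inward.'' This does not work as stated. Unique continuation lets you conclude that two solutions agreeing to infinite order on an open set coincide; it does not convert a polynomial-rate asymptotic symmetry into an exact one, and a metric asymptotic to a symmetric model generically admits no Killing field at all beyond those it is known to have (compare ALE instantons, which are asymptotic to highly symmetric flat cones). There is no candidate vector field defined on the interior to which unique continuation could even be applied, and no maximum principle is available for the deviation from symmetry of a tensor quantity without substantial further structure. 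Establishing a second symmetry from asymptotics alone is of the same order of difficulty as Hawking-type rigidity theorems and cannot be dispatched in a sentence. The cited proof in \cite{Li2} avoids this entirely and is a genuinely different argument: it uses the global complex structure and the extremal K\"ahler metric $\tilde g$ to compactify the underlying complex surface by adding a divisor at infinity (the conformal metric on the model end being a Bach-flat extremal K\"ahler metric with a Poincar\'e type cusp), identifies the compactification as a log del Pezzo surface via complex surface theory, and then classifies the complete extremal K\"ahler metrics on the resulting quasi-projective surfaces; toricity is an output of this classification, not an input to a symmetry-propagation argument. Your appeal to Theorem \ref{thm} as topological input for the $T^2$ action is also not doing any real work here, since the existence of the torus action is precisely what is in question.
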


This result particularly confirms a conjecture of Aksteiner-Andersson \cite{AA}. The proof depends on studying the compactification of the underlying complex manifold to a log del Pezzo surface, the theory of complex surfaces, and the geometry of complete  extremal K\"ahler metrics. Notice that the conformal K\"ahler metric associated to the model ALF/AF end is a Bach-flat extremal K\"ahler metric with a Poincar\'e type cusp. 

In general we expect

\begin{conjecture}[Type II classification]
	All Type II gravitational instantons are toric and must be ALF or AF, except the Eguchi-Hanson metric with the reversed orientation.
\end{conjecture}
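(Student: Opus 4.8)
The plan is to stratify by volume growth and assemble the end-structure results already in hand. By the model-end classification of \cite{Li2} quoted above, any Type II gravitational instanton is conical with an end asymptotic to one of ALE, ALF, AF, skewed special Kasner, $\text{ALH}^*$, or a $\mathbb{Z}_2$ quotient of the last three. Lemma \ref{lem:scalar curvature positive} eliminates the skewed special Kasner and $\text{ALH}^*$ ends together with their $\mathbb{Z}_2$ quotients, as noted after its statement, because these models are incompatible with the positivity $S(\tilde g)>0$ of the conformal extremal K\"ahler metric. Hence the end is ALE, ALF, or AF, and it remains to identify the instanton in each case.

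When the volume growth is non-Euclidean the end is ALF or AF, and this is exactly the content of Theorem \ref{thm:Li2 collapsed}: the instanton is one of the four explicit families (Taub-NUT, Taub-Bolt, Kerr, Chen-Teo), each of which is toric and ALF or AF. This settles both the toricity and the asymptotic conclusion in the non-Euclidean regime, so the entire remaining problem is the Euclidean (ALE) case.

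For Euclidean volume growth, Bando-Kasue-Nakajima \cite{BKN} show the instanton is ALE, asymptotic to $\R^4/\Gamma$; for a Type II end the structure group refines to $\Gamma\subset U(2)$. If $\Gamma\subset SU(2)$ then Theorem \ref{thn: Li1} already pins down the unique example, Eguchi-Hanson with reversed orientation. To complete the proof one must handle $\Gamma\subset U(2)$ with $\Gamma\not\subset SU(2)$. Here I would mimic the strategy behind Theorem \ref{thm:Li2 collapsed}: pass to a double cover carrying a global complex structure $J$, take the extremal vector field $\mathcal K=J\nabla_{\tilde g}S(\tilde g)$ --- nonzero by Lemma \ref{lem:scalar curvature positive} --- as a first holomorphic Killing field, compactify the underlying complex surface to a log del Pezzo surface by adding a boundary divisor dictated by $\Gamma$, and constrain the configuration using the classification of complex surfaces together with the theory of complete extremal K\"ahler (Bach-flat) metrics. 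Toricity, i.e. a second commuting Killing field, should be extracted by propagating the $U(2)$-symmetry of the asymptotic cone inward via the rigidity of the extremal K\"ahler structure, after which the surface theory is meant to force the metric to be Eguchi-Hanson reversed.

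The main obstacle is precisely this ALE case with $\Gamma\not\subset SU(2)$. In contrast to the collapsed setting, where the compactifying divisor is a well-understood Poincar\'e-type cusp and the LeBrun-Tod reconstruction of Biquard-Gauduchon \cite{BG} is available, in the ALE situation the boundary divisor and the extremal K\"ahler geometry near it are more delicate, and toricity is not granted a priori. I expect the genuine difficulty to be ruling out hypothetical Type II (but not Type I) ALE instantons carrying a nontrivial $U(2)/SU(2)$ rotation factor at infinity; overcoming it requires pushing the surface-theoretic and extremal-K\"ahler arguments of \cite{Li1} beyond the $SU(2)$ hypothesis, which is exactly the step that currently leaves the full statement at the level of a conjecture.
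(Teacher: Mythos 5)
This statement is stated in the paper as a \emph{conjecture}; the paper offers no proof of it, only the reduction you reproduce. Your assembly of that reduction is faithful to the paper's own discussion: Lemma \ref{lem:scalar curvature positive} rules out the skewed special Kasner and $\text{ALH}^*$ ends (and their $\mathbb{Z}_2$ quotients), Theorem \ref{thm:Li2 collapsed} disposes of the entire non-Euclidean volume growth regime by listing the four toric ALF/AF families, and Theorem \ref{thn: Li1} handles the ALE case with structure group $\Gamma\subset SU(2)$. Up to that point you are simply restating known results, correctly.

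The genuine gap is exactly where you place it: Type II ALE gravitational instantons with $\Gamma\subset U(2)$ but $\Gamma\not\subset SU(2)$. Your sketch for this case --- compactify to a log del Pezzo surface, propagate the $U(2)$ symmetry of the cone inward via rigidity of the extremal K\"ahler structure, and invoke surface classification --- is a plausible program but not an argument. In particular, two steps are unsupported: (i) there is no mechanism given by which asymptotic symmetry of the cone forces a second global Killing field on the instanton (toricity is an assumption in the Biquard--Gauduchon classification, not a conclusion available here), and (ii) the compactification and divisor analysis in the non-collapsed ALE setting is not carried out anywhere in the paper or its references, as you acknowledge. Since you explicitly flag these as the open difficulties rather than claiming to resolve them, your proposal is an honest reduction of the conjecture to its known hard core, not a proof; this matches the status the statement has in the paper.
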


By Theorem \ref{thm:Li2 collapsed}, it suffices to study Type II ALE gravitational instantons. Note that the partial conclusion of \cite{Li1}, Theorem \ref{thn: Li1}, is that for a Type II ALE gravitational instanton, when the structure group at infinity is in $SU(2)$, it must be the reversed Eguchi-Hanson.

The remaining case of  Type III gravitational instantons are poorly understood at present. Imposing toric symmetries there are relations to integrable systems and axi-symmetric harmonic maps, but there is no general theory yet. The only example to our knowledge is constructed by Khuri-Reiris-Weinstein-Yamada \cite{KRWY}, which is asymptotic to the above special Kasner end (which is Type II). It has quadratic volume growth and admits an effective $T^2$ action. But these examples are not simply-connected; indeed they have fundamental group $\mathbb Z$ with universal cover unwrapping one of the circle direction. They are asymptotic to a ray but they are not $\mathcal{AF}$. 

\begin{question}
Are there simply-connected Type III gravitational instantons? 	
\end{question}
\begin{question}
Are there  Type III $\mathcal{AF}$ gravitational instantons? 	
\end{question}

As a first step towards the classification of Type III gravitational instantons, it is important to understand their asymptotic behavior. 
Under the assumption of faster than quadratic curvature decay and a technical assumption regarding asymptotic holonomy, Chen-Li \cite{Chen-Li} obtained an analogous result to Chen-Chen \cite{Chen-Chen1}. These ends are given by flat torus bundles over flat domains. There are still some fundamental questions that remain to be answered.

 \begin{question}[Conicity]
	Is every gravitational instanton  conical?  
\end{question}
Under the assumption of being strongly $\mathcal {AF}$ this is known by the general result of Kasue \cite{Kasue}. Even under the assumption of $\mathcal{AF}$ this question is open in general. 

To connect with topology, recall that even the standard smooth $\R^4$ admits more than one gravitational instantons, namely, the standard flat metric and the Taub-NUT metric.

\begin{question}
Classify gravitational instantons on topological $\R^4$ and other manifolds with small topology. 	
\end{question}

As a consequence of Theorem \ref{thm} and the result in \cite{Chen-Li} we know 

\begin{corollary}
	The only gravitational instantons on a topological $\R^4$ with curvature faster than quadratic decay are given by the flat metric and the Taub-NUT metric up to scaling.  
\end{corollary}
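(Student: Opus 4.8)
The plan is to combine the Petrunin-Tuschmann structural dichotomy for conical $\mathcal{AF}$ ends with Theorem \ref{thm} to pin down the asymptotic cone, and then invoke the classification of \cite{Chen-Li} in each surviving case. First I would observe that faster than quadratic curvature decay means $|Rm_g|\leq r^{-2-\epsilon}$ for some $\epsilon>0$, so that $g$ is strongly $\mathcal{AF}$ because $\int_0^\infty r\cdot r^{-2-\epsilon}\,dr<\infty$; hence $g$ is conical by \cite{Kasue, MNO}, and in particular $(M,g)$ is conical $\mathcal{AF}$. Since $M$ is homeomorphic to $\R^4$, it has a single end which is simply connected. By Theorem A of \cite{PT}, the asymptotic cone $\mathcal C_\infty$ is then one of the flat $\R^4$, the flat $\R^3$, or the half plane $\mathbb H$.

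The key new input is that Theorem \ref{thm} immediately rules out $\mathcal C_\infty=\mathbb H$, precisely because the end is simply connected. In the case $\mathcal C_\infty=\R^4$, the manifold has Euclidean volume growth, so by Bando-Kasue-Nakajima \cite{BKN} it is ALE, asymptotic to a flat cone $\R^4/\Gamma$; since the cone is the full $\R^4$ the structure group $\Gamma$ is trivial and the end is diffeomorphic to $S^3\times[0,\infty)$. A Ricci-flat ALE $4$-manifold asymptotic to the flat $\R^4$ has vanishing mass, so by the rigidity case of the positive mass theorem it must be isometric to the flat $\R^4$.

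In the remaining case $\mathcal C_\infty=\R^3$, the collapse to the cone is of codimension one, i.e. $g$ is ALF. Here I would invoke \cite{Chen-Li}: under faster than quadratic curvature decay the end is realized as a flat torus bundle over a flat domain, which in the codimension-one situation is a flat circle bundle over $\R^3\setminus B_R$. Since $M$ is a topological $\R^4$, the circle must fiber over $S^2$ by the Hopf fibration, and the classification of \cite{Chen-Li} identifies the resulting gravitational instanton, up to scaling, with the Taub-NUT metric. Assembling the three cases yields exactly the flat metric and the Taub-NUT metric.

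The main obstacle I anticipate is the ALF case: one must check that the technical asymptotic-holonomy hypothesis of \cite{Chen-Li} is satisfied in our setting (or is automatic for a Ricci-flat end with asymptotic cone $\R^3$), and then that the classification genuinely singles out Taub-NUT as the unique such gravitational instanton on the topological $\R^4$, using the simple connectivity of the end to exclude nontrivial monodromy of the circle bundle. By contrast, the $\R^4$ alternative is a routine rigidity statement, and the elimination of $\mathbb H$ is exactly the content of Theorem \ref{thm}, which is what makes the corollary available.
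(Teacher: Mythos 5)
Your argument is correct and follows exactly the route the paper intends: the paper offers no written proof beyond attributing the corollary to Theorem \ref{thm} together with \cite{Chen-Li}, and your elaboration---faster than quadratic decay gives strongly $\mathcal{AF}$ hence conical, the Petrunin--Tuschmann trichotomy for the simply connected end $S^3\times[0,\infty)$, Theorem \ref{thm} eliminating the half plane $\mathbb H$, volume (or mass) rigidity in the ALE case, and \cite{Chen-Li} in the ALF case---is the natural way to fill that in. The one point you rightly flag, namely that the ALF case needs \cite{Chen-Li} to supply not just the end structure but the identification with Taub--NUT (and that its asymptotic-holonomy hypothesis holds here), is precisely what the authors are delegating to that reference, so your proof is no less complete than theirs.
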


\begin{remark}
Without the finite energy condition, the question is more complicated. We do not know whether an exotic $\R^4$ admit an complete metric with vanishing (or non-negative) Ricci curvature. 
\end{remark}

\bibliographystyle{plain}
\bibliography{reference}

\end{document}